\LetLtxMacro\todonotestodo\todo
\renewcommand{\todo}[2][]{\todonotestodo[#1]{TODO: {#2}}}
\theoremstyle{definition}
\newtheorem{lemma}{Lemma}[section]
\newtheorem{theorem}[lemma]{Theorem}
\newtheorem{proposition}[lemma]{Proposition}
\newtheorem{corollary}[lemma]{Corollary}
\newtheorem{fact}[lemma]{Fact}
\newtheorem{remark}[lemma]{Remark}
\newtheorem{example}[lemma]{Example}
\newtheorem{definition}[lemma]{Definition}
\newtheorem*{claim*}{Claim}
\newtheorem*{theorem*}{Theorem}
\newtheorem*{corollary*}{Corollary}
\newtheorem*{lemma*}{Lemma}
\newtheorem*{remark*}{Remark}
\newtheorem*{question*}{Question}
\newtheorem{question}[lemma]{Question}
\newtheorem{problem}[lemma]{Problem}
\newtheorem{expl}[lemma]{Explanation}
\newcommand{\Z}{\mathbb{Z}}
\newcommand{\R}{\mathbb{R}}
\newcommand{\C}{\mathbb{C}}
\DeclareMathOperator{\Spec}{Spec}
\DeclareMathOperator{\Max}{Max}
\newcommand{\zar}[1]{\overline{#1}^{\text{Zar}}}
\title{Dependency equilibria: Boundary cases and their real algebraic geometry}
\author{Irem Portakal}
\address{Max Planck Institute for Mathematics in the Sciences Leipzig}
\email{mail@irem-portakal.de}
\author{Daniel Windisch}
\address{School of Mathematics, University of Edinburgh}
\email{daniel.windisch@ed.ac.uk}
\keywords{Nash, dependency equilibrium, Spohn variety, real algebraic variety, normal-form games}
\subjclass{91A35, 91B52, 91A05, 91A06, 14P05, 14P10}
\begin{document}

\maketitle

\begin{abstract}
This paper is a significant step forward in understanding dependency equilibria within the framework of real algebraic geometry encompassing both pure and mixed equilibria. In alignment with Spohn's original definition of dependency equilibria, we propose two alternative definitions, allowing for an algebro-geometric comprehensive study of all dependency equilibria. We give a sufficient condition for the existence of a pure dependency equilibrium and show that every Nash equilibrium lies on the Spohn variety, the algebraic model for dependency equilibria. For generic games, the set of real points of the Spohn variety is Zariski dense. Furthermore, every Nash equilibrium in this case is a dependency equilibrium.  Finally, we present a detailed analysis of the geometric structure of dependency equilibria for $(2\times2)$-games.
\end{abstract}

\section{Introduction}

A well-known concept in game theory, the Nash equilibrium, describes a state in which no player can improve their payoff by unilaterally changing their strategy. However, in some games, Nash equilibria do not lead to the most optimal outcomes, as in the case of the Prisoner’s Dilemma. Spohn addresses this issue by introducing the notion of dependency equilibria (DE) in 2003 to rationalize cooperation in the Prisoner’s Dilemma. This concept suggests that players' actions influence each other in a causal loop. The notion of DE can be understood through reflexive decision theory, where players appear to share a common cause in the standard non-reflexive model \cite{Spohn2003, spohn2024}. Spohn studied DE \cite{Spohn2007}, highlighting its non-trivial and fundamentally different nature compared to Nash equilibria, as even in $(2\times 2)$-games, it is no longer described by linear polynomials \cite{kidambi2025elliptic}. This started the fundamental geometric study of DE by the first author and Sturmfels in \cite{Irem-Bernd}. In this paper, the first connection between algebraic statistics and game theory was established. The classical notion of graphical models was used to examine the so-called conditional independence equilibria that model different possible dependencies between the players. This gave rise to universality theorems for conditional independence equilibria \cite{portakal2024game,portakal2024nash}, similar to the universality for Nash equilibria \cite{datta2003universality}. In the following, we present three possible formal objections against the general approach carried out in these publications and how we address them throughout the paper to make the results applicable for game theoretic purposes.
\begin{enumerate}
\item The authors considered all complex points satisfying the equations for the dependency equilibria of a given game, that is, the complex projective Spohn variety of the game (\cite[Theorem 3.2]{Irem-Bernd}, Definition~\ref{def: Spohn variety}). However, the main interest of game theorists, epistemologists, and people in economics concerns, of course, the set of real non-negative points because these are the ones that potentially can be DE at all. 
\begin{enumerate}
    \item[$\bullet$] In Theorem~\ref{theorem:realdense}, we prove that for generic games, the set of real points of the Spohn variety is Zariski dense. Consequently, the results on the complex Spohn variety from \cite[Theorem 3.2]{Irem-Bernd} for generic games also hold for its set of real points.
\end{enumerate}
\item Spohn's definition of DE, see~\cite{Spohn2003,Spohn2007}, involves conditional probabilities and, therefore, divisions by terms which are potentially equal to $0$. He fixes the definition by using limits. Since the paper~\cite{Irem-Bernd} was intended as a first step towards the geometric study of DE, this technicality was left unregarded and points where such a ``division by $0$'' happens were not considered. Thus, only the totally mixed equilibria were studied. 
\begin{enumerate}
    \item[$\bullet$] In Definition~\ref{def:de}, we give two definitions for general DE and we prove that every dependency equilibrium of a game lies on the Spohn variety (Lemma~\ref{lemma:DEinV(2x2)}). Theorem~\ref{Theorem:tangent} gives a criterion, in terms of linear algebra, for the existence of a whole family of totally mixed DE around a pure strategy. This criterion also serves as a sufficient condition for the existence of a pure DE.
    \item[$\bullet$] In Theorem~\ref{theorem:NEonSpohn}, we prove that every Nash equilibrium of a game $X$ lies on the Spohn variety of $X$. Moreover, if no irreducible component of the Spohn variety is contained in the hyperplanes generated by the denominators appearing in the definition of DE (conditional probabilities that are potentially zero), then every Nash equilibrium is a dependency equilibrium (Corollary~\ref{corollary:NEisDEgeneric}).
\end{enumerate}    
\item The payoff tables are most of the time assumed to be generic. In real-life situations, many common models (e.g.\ Example~\ref{example:one-generic-one-reducible}(b), Example~\ref{ex: Bach or Stravinski}) are not generic.  
\begin{enumerate}
\item[$\bullet$] In addition to the previous two points, we show that the Spohn variety can include points on the boundary of the probability simplex that are not DE. Moreover, there may be more DE than just the points on the Spohn variety whose conditional payoffs are well-defined (Theorem~\ref{theorem:bounds}). However, generically, these distinctions do not need to be considered (Proposition~\ref{proposition:constructible}). Throughout the paper, we also present detailed computations of various non-generic examples of games. In Section~\ref{section:(2x2)}, we specifically examine arbitrary $(2\times 2)$-games.
\end{enumerate}
\end{enumerate}

\section{Preliminaries and first examples}\label{section:explanation}

The purpose of this section is to introduce some game-theoretic notation for the following sections and its interpretation in terms of algebraic varieties. These geometric objects and their generalizations (such as schemes) are well-studied in mathematics and, more specifically, in algebraic geometry, and their investigation has evolved to a far-reaching theory. For references to the topic, see, for instance, \cite{classical-AG} introducing classical algebraic geometry and \cite{Wedhorn} developing the theory of schemes.
The algebraic variety that we are concerned with in order to study dependency equilibria (DE) is called the Spohn variety. We set up some notation before its definition.

The ambient space of the Spohn variety is a complex projective space $\mathbb{P}^s$ of dimension $s = d_1 \cdots d_n-1$, where $n$ is the number of players and $d_i$ is the number of pure strategies of player $i$. The reader not so familiar with projective spaces might just think of the ambient space as $\C^s$. 
Each player has a payoff table which is a tensor $X^{(i)}$ of format $d_1 \times \cdots \times d_n$ with real entries. For $2$-player games, these are two $(d_1 \times d_2)$-matrices. Here, $X^{(i)}_{j_1 \ldots j_n}$ represents the payoff that player $i$ receives when player $1$ chooses strategy $j_1$, player $2$ chooses strategy $j_2$, etc. The tuple $X = (X^{(i)})$ is typically called a \textit{game in normal form} and it can be interpreted as a point in $\R^{n\times d_1 \times \ldots \times d_n}$.

We consider (\textit{joint}) \textit{strategies} $p$, that is, joint probability distributions over the set of pure strategies of the players: The entry $p_{j_1\ldots j_n}$ is the probability that the first player chooses the strategy $j_1$, the second player $j_2$, etc. We will frequently interpret $p$ as a point in $\mathbb P^s$ that admits a representation by non-negative real coordinates. We denote by $\overline{\Delta}$ the set of all joint strategies and by $\Delta$ the set of all \textit{totally mixed strategies}, that is, strategies with all entries $p_{j_1\ldots j_n} >0$. The finitely many strategies with one entry equal to $1$ (and hence all others equal to $0$) are called \textit{pure}.  A DE is a strategy $p$ where each player $i \in [n] = \{1,\ldots,n\}$ maximizes their conditional expected payoff $\mathbb{E}^{(i)}_{k}(p)$ conditioned on them having fixed pure strategy $k \in [d_i]$. In the following, the hat means that the $i$th sum is removed:

\[
\displaystyle \mathbb{E}^{(i)}_{k}(p):=\sum_{j_1 = 1}^{d_1} \ldots \widehat{\sum_{j_{i} = 1}^{d_{i}}} \ldots \sum_{j_n = 1}^{d_n} X^{(i)}_{j_1\cdots k \cdots  j_n}\frac{p_{j_1\cdots k \cdots j_n}}{p_{+\cdots+k+\cdots+}},
\]
where $$p_{+\cdots+k+\cdots+}:=
\displaystyle\sum_{j_1 = 1}^{d_1} \ldots \widehat{\sum_{j_{i} = 1}^{d_{i}}} \ldots \sum_{j_n = 1}^{d_n}
p_{j_1\cdots k \cdots j_n}.
$$

\begin{definition}
   Let $X$ be a $(d_1 \times \cdots \times d_n)$-game in normal form. A joint probability distribution $p$ is called a \textit{dependency equilibrium} (DE) if $\mathbb{E}^{(i)}_{k}(p) \geq \mathbb{E}^{(i)}_{k'}(p)$ for all $k, k' \in [d_i]$ and for all $i \in [n]$.
\end{definition}

The above definition uses the condition $\mathbb{E}^{(i)}_{k}(p) \geq \mathbb{E}^{(i)}_{k'}(p)$ for all $k, k' \in [d_i]$. Hence, similarly to~\cite[Equation (4)]{Irem-Bernd}, we could also replace the inequality by an equality.

One may describe the totally mixed DE of a game $X$ as the points of an algebraic variety associated to $X$, the Spohn variety $\mathcal{V}_X$ of $X$, that have positive real coordinates \cite[Theorem 3.2]{Irem-Bernd}. The restriction to totally mixed DE was needed as the denominators of the conditional expected payoffs might vanish. We will treat the general case of not necessarily totally mixed strategies in Section~\ref{section:arbitraryGames} and we will see that the Spohn variety also plays a prominent role there.

\begin{definition}\label{def: Spohn variety}
The \textit{Spohn variety} $\mathcal{V}_X \subseteq \mathbb P^s$ of the game $X$ is the complex projective variety defined by the $(2\times 2)$-minors of the following $(d_i \times 2)$-matrices  $M_1, \ldots, M_n$ of linear forms:
 \begin{equation*}\label{eq: Spohn matrices}
M_i \,=\, M_i(p) \,\,:= \,\,\,\begin{bmatrix}
\vdots & \vdots \\
\,\,p_{+\cdots + k + \cdots+}\, &\,\, \displaystyle \sum_{j_1=1}^{d_1}\cdots \sum_{j_{i-1}=1}^{d_{i-1}} \sum_{j_{i+1}=1}^{d_{i+1}}\cdots \sum_{j_n=1}^{d_n} X^{(i)}_{j_1 \cdots  k  \cdots j_n} p_{j_1 \cdots  k  \cdots j_n}\, \\
\vdots & \vdots 
\end{bmatrix} \! .
\end{equation*}
\end{definition}

Throughout the paper, we will use the term ``generic'' in the sense of algebraic geometry. More precisely, a property or a statement holds \textit{generically} if it holds on an open (and hence dense) subset of the ambient space with respect to the Zariski topology. In our case, the ambient space will always be the real affine space $\R^{n\times d_1 \times \ldots \times d_n}$ of all games of a fixed size. In this setting, a statement like ``property $P$ holds for generic games'' translates as ``the set of games $X \in \R^{n\times d_1 \times \ldots \times d_n}$ satisfying property $P$ is Zariski open''. Note that the usual notion of a generic game from game theory is similar to our notion of genericity, where \( P \) is defined as ``the set of Nash equilibria of \( X \) consists of isolated points''. However, these two concepts of genericity may differ and should be studied separately. One example is ``Bach or Stravinsky'' (Example~\ref{ex: Bach or Stravinski}): Whereas this game has isolated points as its set of Nash equilibria, it is not generic in the sense of DE, since its Spohn variety is a reducible curve.

In \cite[Theorem 3.2]{Irem-Bernd}, it is shown that, for generic games, the Spohn variety is irreducible of codimension $d_1+\ldots+d_n - n$ and degree $d_1 \cdots d_n$ in $\mathbb P^s$. In any case, the dimension is at least the first number and the degree is at most the second number. This follows from Bézout's Theorem, a standard result in algebraic geometry. Understanding the Spohn variety is the first step in understanding how the set of DE of a game might look like.

\begin{example}\label{example:one-generic-one-reducible}
Let us consider $(2 \times 2)$-games in normal form, that is, games with two players each of which has two pure strategies, also called bimatrix games. For simplicity, we denote the players' $(2 \times 2)$-payoff matrices as \[
\begin{pmatrix}
a_{11} & a_{12}\\
a_{21} & a_{22} 
\end{pmatrix} \text{ and }
\begin{pmatrix}
b_{11} & b_{12}\\
b_{21} & b_{22} 
\end{pmatrix}.
\]
The Spohn variety $\mathcal{V}_X$ is defined by the determinants of $M_1$ and $M_2$ 
\[M_1 = \begin{bmatrix}
p_{11} + p_{12} & a_{11} p_{11} +a_{12} p_{12} \\
p_{21} + p_{22}  & a_{21} p_{21} +a_{22} p_{22} 
\end{bmatrix}, \ M_2 = \begin{bmatrix}
p_{11} + p_{21} & b_{11} p_{11}  +b_{21} p_{21} \\
 p_{1 2} + p_{22} & b_{12} p_{12} +b_{22} p_{22} \\

\end{bmatrix}\]

\noindent which give the two following defining equations
\begin{align*}
&(p_{21} + p_{22})(a_{11} p_{11} + a_{12} p_{12}) - (p_{11} + p_{12})(a_{21} p_{21} + a_{22} p_{22}) = 0& \\
&(p_{12} + p_{22})(b_{11} p_{11} + b_{21} p_{21}) - (p_{11} + p_{21})(b_{12} p_{12} + b_{22} p_{22}) = 0&
\end{align*}
The projective ambient space of $\mathcal{V}_X$ is three-dimensional. Its expected dimension is $1$ and its expected degree is $4$, hence it is usually a curve of degree 4.

\begin{enumerate}
\item[(a)] An example of a game that is generic in the sense that the Spohn variety is an irreducible curve of degree $4$ is given by game 114 in the \textit{Periodic Table of the $2 \times 2$ Games} of Goforth and Robinson, see~\cite{periodic-table}:
\[
\begin{pmatrix}
1 & 3\\
2 & 4
\end{pmatrix} \text{ and }
\begin{pmatrix}
4 & 1\\
2 & 3 
\end{pmatrix}
\]

\item[(b)] The game ``Prisoner's Dilemma'' can be realized by the payoff tables
\[
\begin{pmatrix}
-2 & -10\\
-1 & -5
\end{pmatrix} \text{ and }
\begin{pmatrix}
-2 & -1\\
-10 & -5
\end{pmatrix}.
\]
Its Spohn variety is a union of two irreducible curves of degree $2$ whose systems of defining polynomials are given by
\[
(p_{12} - p_{21}, p_{11}p_{21} + 9p_{21}^2  - 3p_{11}p_{22} + 5p_{21}p_{22})
\]
and
\[
(p_{11} - 5p_{22}, 9p_{12}p_{21} + 5p_{12}p_{22} + 5p_{21}p_{22} - 15p_{22}^2 ).
\]
Its real part $\mathcal{V}(\R)$ intersects the tetrahedron $\overline{\Delta}$ of all strategies, as shown in Figure~\ref{figure:prisoner}. The first irreducible component enters $\overline{\Delta}$ at the pure strategy $(1,0,0,0)$ and leaves it at $(0,0,0,1)$. The second irreducible component intersects $\overline{\Delta}$ at the other two pure strategies but enters it only later at two mixed strategies of the form $(p_{11},0,p_{12},p_{22})$ and $(p_{11},p_{21},0,p_{22})$. We will argue later in Theorem~\ref{theorem:bounds} that for generic games, including this game, the set of DE is given exactly by the common points of $\mathcal{V}(\R)$ and $\overline{\Delta}$. 
\end{enumerate}
\end{example}

\begin{figure}[h]
\includegraphics[scale=0.6]{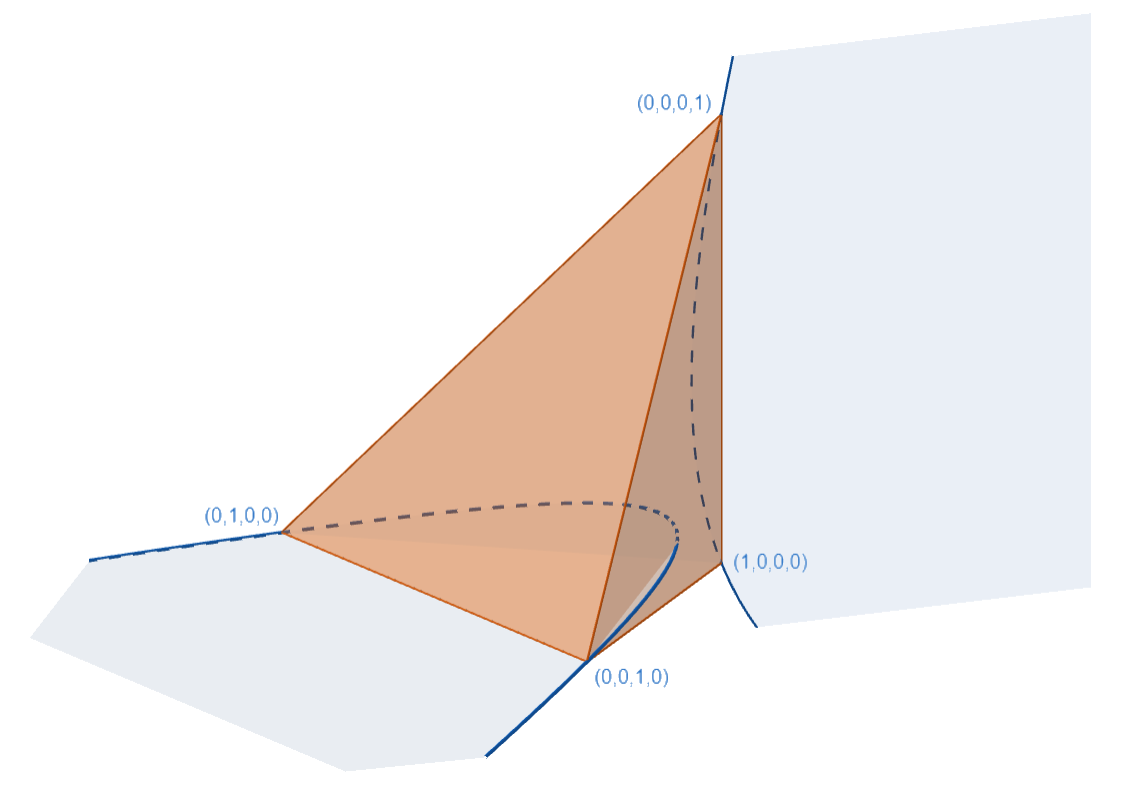}
\caption{The real part of the Spohn variety $\mathcal{V}_X$ (blue lines) of ``Prisoner's Dilemma'' and the tetrahedron $\overline{\Delta}$ of all strategies (orange). The vertices of $\overline{\Delta}$ are the pure strategies. Both components of $\mathcal{V}_X$ are plane conics. The game has two pure DE $(1,0,0,0)$ and $(0,0,0,1)$ attached to a curve segment of totally mixed DE, two isolated pure DE $(0,1,0,0)$ and $(0,0,1,0)$, and two mixed DE with one coordinate equal to $0$ which are attached to another curve segment of totally mixed DE.}\label{figure:prisoner}
\end{figure}

\section{Results for arbitrary games}\label{section:arbitraryGames}

\subsection*{Real points of the Spohn variety}

In the following, we show that the Spohn variety $\mathcal{V} = \mathcal{V}_X$ of a game $X$, that is, the variety of all complex projective points satisfying the equations of a dependency equilibrium, always has smooth points with real coordinates. This enables us to apply the following fact which, in turn, makes it possible to study the set of all DE from the point of view of complex projective varieties. As a consequence, we can apply the results of~\cite{Irem-Bernd} to the case that is actually relevant for game-theoretical applications, namely, to real points on the Spohn variety.

\begin{fact}~\cite[Theorem 2.2.9]{Mangolte} \label{fact:realpoints}
Let $V$ be an irreducible complex affine variety defined by real polynomials. If $V$ contains a smooth point with real coordinates then $V(\mathbb R)$, the set of real points of $V$, lies Zariski dense in $V$.
\end{fact}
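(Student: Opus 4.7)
The plan is to prove the equivalent formulation that every polynomial $f \in \C[x_1, \ldots, x_n]$ vanishing on $V(\R)$ must vanish on all of $V$; this yields Zariski density since the Zariski closure of $V(\R)$ inside $V$ is cut out precisely by the ideal of such $f$. Let $p \in V(\R)$ be a smooth point, and set $d = \dim V$. Because the defining polynomials of $V$ have real coefficients and $p$ has real coordinates, the real and complex Jacobians at $p$ coincide and both have rank $n - d$. The real implicit function theorem then produces an open ball $B \subseteq \R^d$ together with a real analytic embedding $\varphi \colon B \to \R^n$ parametrizing a Euclidean open neighborhood of $p$ in $V(\R)$. Applied to the same polynomial system, the complex implicit function theorem extends $\varphi$ to a complex analytic embedding $\Phi \colon B_{\C} \to \C^n$ on a polydisc $B_{\C}$ around $B$, parametrizing a Euclidean open neighborhood of $p$ in $V$, and satisfying $\Phi|_B = \varphi$.

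The heart of the argument is to transfer vanishing from the real slice to its complex thickening via an identity principle. Assuming $f$ vanishes on $V(\R)$, the composition $f \circ \varphi$ is identically zero on $B$. Since $f \circ \Phi$ is holomorphic on $B_{\C}$ and vanishes on the totally real submanifold $B$ of maximal real dimension, expanding $f \circ \Phi$ in a Taylor series around any $q \in B$ shows that all partial derivatives at $q$ are determined by the values of $f \circ \varphi$ on a neighborhood of $q$ in $B$, so every Taylor coefficient is zero. Hence $f \circ \Phi \equiv 0$ on $B_{\C}$, which means $f$ vanishes on an entire Euclidean open neighborhood of $p$ in $V$.

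To finish, I would globalize via irreducibility: the smooth locus $V_{\mathrm{sm}}$ of $V$ is nonempty, Zariski open, and, by the classical fact that an irreducible complex algebraic variety is connected in the Euclidean topology, Euclidean connected. A holomorphic function on a connected complex manifold vanishing on a nonempty Euclidean open subset vanishes identically, so $f|_{V_{\mathrm{sm}}} \equiv 0$; Zariski density of $V_{\mathrm{sm}}$ in $V$ then forces $f \equiv 0$ on $V$. The principal obstacle is the analytic bridge in the middle step, namely producing $\Phi$ coherently with $\varphi$ and invoking the several-variable identity principle to convert real vanishing into complex vanishing; once this is in place, the remaining ingredients, the real and complex implicit function theorems at a smooth real point and the Euclidean connectivity of the smooth locus of an irreducible complex variety, are standard.
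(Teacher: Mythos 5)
The paper does not prove this statement; it is quoted as a known result with a citation to \cite[Theorem 2.2.9]{Mangolte}, so there is no in-paper argument to compare against. Your proof is correct and is essentially the standard argument found in the references (Mangolte, Bochnak--Coste--Roy): reduce density to the vanishing statement for ideals, use the real and complex implicit function theorems at the smooth real point, propagate vanishing from the totally real slice to the complex chart by the identity principle, and then spread it over all of $V$ using the Euclidean connectedness of the smooth locus of an irreducible variety. The only step you pass over quickly is the identification, near the smooth point $p$, of the analytic germ of $V$ with the $d$-dimensional manifold cut out by the $n-d$ chosen equations (a priori the manifold only contains the germ of $V$; equality holds because both germs are irreducible of dimension $d$ at a smooth point), and the observation that $I(V)$ is generated by real polynomials since it is stable under coefficientwise conjugation --- both standard, and neither constitutes a gap.
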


We can apply this by taking a dense open affine subset of the ambient projective space and thereby assuming that $V = \mathcal{V}$ is affine. In general, the set of points with real coordinates of a variety can have very strange properties and might even be empty. However, the ``in particular'' of Theorem~\ref{theorem:realdense} tells us that for generic games $X$ the real part of the Spohn variety $\mathcal{V}_X$ is as large and looks structurally exactly as one would expect from looking at $\mathcal{V}_X$. Corollary~\ref{corollary:realdense} then uses this statement to infer that the set of real points on $\mathcal{V}_X$ is generically irreducible, it has (real) dimension $s - (d_1+\ldots+d_n -n)$ and degree $d_1 \cdots d_n$.

\begin{theorem}\label{theorem:realdense}
For an arbitrary game $X$, the Spohn variety $\mathcal{V} = \mathcal{V}_X$ contains a smooth point with real coordinates. In particular, if $\mathcal{V}$ is irreducible then its set of real points lies Zariski dense in $\mathcal{V}$.
\end{theorem}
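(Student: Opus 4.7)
The plan is to exhibit an explicit real point of $\mathcal{V}_X$ and verify its smoothness by a direct Jacobian calculation; the ``in particular'' claim then follows at once from Fact~\ref{fact:realpoints}.

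The natural candidates are the pure strategies $\delta_{\vec{j}^*} \in \mathbb{P}^s$ attached to pure profiles $\vec{j}^* = (j_1^*,\ldots,j_n^*) \in [d_1]\times\cdots\times[d_n]$. Every pure strategy lies on $\mathcal{V}_X$: the matrix $M_i(\delta_{\vec{j}^*})$ has its unique non-zero row at position $j_i^*$, equal to $(1,X^{(i)}_{\vec{j}^*})$, hence is of rank at most one and all $2\times 2$ minors vanish.

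To check smoothness I would substitute $p=\delta_{\vec{j}^*}+\epsilon\eta$ into the defining minors and expand to first order in $\epsilon$. A short calculation shows that the minor of $M_i$ indexed by two rows both different from $j_i^*$ vanishes to order $\epsilon^2$, whereas the minor indexed by rows $(j_i^*,k)$ with $k\ne j_i^*$ contributes the linear form
\[
\ell_{i,k}(\eta)\;=\;\sum_{j_{-i}} \bigl(X^{(i)}_{j_{-i},k}-X^{(i)}_{\vec{j}^*}\bigr)\,\eta_{j_{-i},\,k^{(i)}},
\]
where $k^{(i)}$ means that the $i$-th coordinate equals $k$. These $\sum_i(d_i-1)$ forms cut out the tangent space of $\mathcal{V}_X$ at $\delta_{\vec{j}^*}$.

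The structural key is that for each pair $(i,k)$ with $k\ne j_i^*$, the ``unilateral deviation'' variable $\eta_{\vec{j}^*[i\to k]}$ (the coordinate of $\eta$ differing from $\vec{j}^*$ only in position $i$, with value $k$) appears in $\ell_{i,k}$ alone, with coefficient $X^{(i)}_{\vec{j}^*_{-i},k}-X^{(i)}_{\vec{j}^*}$. The submatrix of the Jacobian on these columns is therefore diagonal, with diagonal entries precisely the unilateral-deviation payoff differences. Whenever one can choose $\vec{j}^*$ so that all these entries are non-zero---equivalently, every unilateral deviation from $\vec{j}^*$ strictly changes the deviator's payoff---the Jacobian attains the expected maximal rank $\sum_i(d_i-1)$ and $\delta_{\vec{j}^*}$ is a smooth real point of $\mathcal{V}_X$.

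The principal difficulty will be to handle the genuinely degenerate games for which no such $\vec{j}^*$ exists, for instance when some $X^{(i)}$ is independent of the $i$-th coordinate and thus forces every corresponding diagonal entry to vanish for every profile. In such cases some of the defining minors become dependent (or even trivial), the local codimension of $\mathcal{V}_X$ also drops, and for smoothness the Jacobian rank need only match this smaller local codimension. I plan to handle this by induction on the number of players whose payoff genuinely depends on their own strategy, the base case ($\mathcal{V}_X = \mathbb{P}^s$) being trivial; the degenerate factor of $\mathcal{V}_X$ then reduces to a Spohn variety of a subgame, to which the pure-strategy argument applies. As an alternative I would, if necessary, replace the pure candidate by a generic real point of a real fibre $L_\lambda$ of the parameterization $p\mapsto(\lambda_1,\ldots,\lambda_n)$, where an entirely analogous tangent-space computation goes through.
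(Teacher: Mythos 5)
Your opening computation is sound and is essentially what the paper itself does for $(2\times 2)$-games (Proposition~\ref{proposition:smooth(2x2)}, via the Jacobian of Remark~\ref{remark:Jacobian}): the linearization of the $(j_i^*,k)$-minor of $M_i$ at a pure strategy is indeed $\sum_{j_{-i}}\bigl(X^{(i)}_{j_{-i},k}-X^{(i)}_{\vec{j}^*}\bigr)\eta_{j_{-i},k}$, the unilateral-deviation columns give a diagonal submatrix, and since every component of $\mathcal{V}_X$ has dimension at least $s-\sum_i(d_i-1)$, attaining Jacobian rank $\sum_i(d_i-1)$ does force smoothness. The genuine gap is in the degenerate case, which is the whole point of a theorem asserted for \emph{arbitrary} games. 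First, your diagnosis of when the argument fails is too narrow. Take the $(3\times 2)$-game with first payoff table having columns $(0,0,1)^{T}$ and $(0,1,1)^{T}$ and with $b_{11}=b_{12}$, $b_{31}=b_{32}$, $b_{21}\neq b_{22}$: player $1$'s diagonal condition holds only at the profiles $(3,1)$ and $(1,2)$, and player $2$'s condition is killed precisely there, so \emph{every} pure profile has a vanishing diagonal entry although neither payoff tensor is independent of its owner's coordinate. Your induction on the number of own-strategy-dependent players therefore never gets started on such games. Second, the induction itself is not set up correctly: when no player's payoff depends on their own strategy, a $(2\times 2)$-game with non-constant tables yields $\mathcal{V}_X=V(p_{11}p_{22}-p_{12}p_{21})$, the Segre quadric (case (3)(a) of Theorem~\ref{theorem:dimension(2x2)}), not $\mathbb{P}^s$ — that base case occurs only for constant tables — and the assertion that ``the degenerate factor reduces to a Spohn variety of a subgame'' is unsubstantiated, since the equations contributed by an own-strategy-independent player neither vanish nor visibly factor into a smaller Spohn system.

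What actually closes the gap is the alternative you relegate to your final sentence, and it is the paper's main argument for all formats other than $(2\times2)$: the Konstanz-matrix parametrization $\phi:\mathcal{V}_X\dashrightarrow(\mathbb{P}^1)^n$, under which $\mathcal{V}_X=\{p \mid K_X(z)\cdot p=0 \text{ for some } z\in(\mathbb{P}^1)^n\}$ with $K_X(z)$ real whenever $z$ is real. A fibre over a real $z$ is the kernel of a real matrix and hence contains real points; choosing a real $z$ whose fibre is not contained in the singular locus produces a smooth real point with no case analysis and no genericity assumption. To make your proof complete you should promote this from an afterthought to the main argument (and justify the existence of such a real $z$), reserving the pure-strategy Jacobian computation for the $(2\times2)$ case, where the paper likewise falls back on it.
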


\begin{proof}
For $(2 \times 2)$-games, this is just Proposition~\ref{proposition:smooth(2x2)}. 

In all other cases, we use the vector bundle parametrization $\phi: \mathcal V_X \dashrightarrow (\mathbb{P}^1)^n$ appearing in the proof of~\cite[Theorem 3.4]{Irem-Bernd} and can write
\[
\mathcal{V} = \{p \mid K_X(z) \cdot p = 0 \text{ for some } z \in (\mathbb{P}^1)^n\},
\]
where $K_X(z)$ is the Konstanz matrix of the game $X$, as defined in~\cite{Irem-Bernd}. Its entries are real numbers when $z$ has real coordinates. Note that the standing assumption of~\cite{Irem-Bernd} that the payoff tables of the game being generic is not necessary for the construction of the map. 

Take a real point $z$ in the image of $\phi$ such that $\phi^{-1}(\{z\})$ is not contained in the singular locus of $\mathcal{V}$. Now, as $\phi^{-1}(\{z\}) = \{p \mid K_X(z) \cdot p = 0\}$ is the kernel of a real matrix it contains real points and, hence, also a smooth real point of $\mathcal{V}$.
\end{proof}

Regarding game-theoretic applications, it might be interesting to find smooth real points on every irreducible component of the Spohn variety of a game, as this would prove the Zariski density of real points of the Spohn variety for arbitrary games. Consequently, the Spohn variety would, in general, structurally reflect DE. In particular, this has been proven for $(2 \times 2)$-games in \cite[Lemma 3.5]{kidambi2025elliptic}.

\begin{question}
    Does every irreducible component of the Spohn variety of a game contain a smooth real point?
\end{question}

The following is the combination of Theorem~\ref{theorem:realdense} with~\cite[Theorem 3.2]{Irem-Bernd}.

\begin{corollary}\label{corollary:realdense}
The set of real points of the Spohn variety of a game has the following properties generically: It is irreducible, of codimension $d_1+ \ldots + d_n - n$ and degree $d_1 \cdots d_n$.
\end{corollary}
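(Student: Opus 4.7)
The plan is to combine the two tools already established in this section: \cite[Theorem 3.2]{Irem-Bernd}, which pins down the complex geometry of $\mathcal{V}_X$ for generic $X$, and Theorem~\ref{theorem:realdense}, which transfers such geometry to the real locus.

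First, I would fix the Zariski-open subset $U \subseteq \R^{n \times d_1 \times \ldots \times d_n}$ of games provided by \cite[Theorem 3.2]{Irem-Bernd}, on which $\mathcal{V}_X$ is irreducible, of codimension $d_1 + \ldots + d_n - n$, and of degree $d_1 \cdots d_n$. Since any claim holding ``generically'' is allowed to shrink $U$ further, it suffices to verify the three asserted properties for $X \in U$. For such an $X$, Theorem~\ref{theorem:realdense} produces a smooth real point of $\mathcal{V}_X$, and its ``in particular'' clause --- applicable precisely because $\mathcal{V}_X$ is irreducible --- yields that $\mathcal{V}_X(\R)$ is Zariski dense in $\mathcal{V}_X$. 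Hence the Zariski closure of $\mathcal{V}_X(\R)$ inside $\mathbb{P}^s$ is $\mathcal{V}_X$ itself, and the three invariants --- irreducibility, codimension, and degree --- are inherited by $\mathcal{V}_X(\R)$ from $\mathcal{V}_X$.

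The only conceptual point worth flagging, rather than a genuine obstacle, is that ``codimension'' and ``degree'' of $\mathcal{V}_X(\R)$ must be read off through its complex Zariski closure, since they are not a priori defined for a semi-algebraic set; Zariski density is exactly what makes these notions unambiguous. Beyond this bookkeeping, I do not anticipate any difficulty, as the corollary is essentially a one-line combination of the two ingredients already proved.
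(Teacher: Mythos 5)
Your proposal is correct and matches the paper's own argument, which states the corollary is precisely "the combination of Theorem~\ref{theorem:realdense} with~\cite[Theorem 3.2]{Irem-Bernd}": restrict to the generic locus where the complex Spohn variety has the stated properties, invoke the Zariski density of real points from the ``in particular'' clause, and read off the invariants through the Zariski closure. Your remark that codimension and degree of the real locus must be interpreted via this closure is a sensible clarification and does not diverge from the paper's route.
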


\subsection*{Passing to boundary cases}

In this section, we change loosely between the projective setting of the Spohn variety and the affine setting of points in $\C^{d_1 \times \ldots \times d_n}$. Namely, if some definition depends on the choice of the affine representative of a projective point, we always take the representative whose entries sum up to $1$, as is customary in the setting of probability distributions and strategies of games.

Spohn~\cite{Spohn2003,Spohn2007} defines DE on the boundary of $\overline{\Delta}$ by using limits. The definition we use is slightly more restrictive but coincides with Spohn's for generic games. A DE is a point
$p = (p_{j_{1}\ldots j_{n}})$ in $\overline{\Delta}$ that is the limit of
a sequence $(p^{(r)})_{r\in \mathbb{N}}$ in $\Delta $ satisfying
\begin{align}\label{equation:limDE}
\lim_{r \to \infty} \mathbb E^{(i)}_k(p^{(r)}) = \lim_{r \to \infty} \mathbb E^{(i)}_{k'}(p^{(r)})
\end{align}
for all $i \in \{1,\ldots,n\}$ and $k,k' \in \{1,\ldots,d_i\}$.
In the special case of $(2 \times 2)$-games with the notation of~\cite[Example 2.2]{Irem-Bernd}, these conditions read as follows:

\begin{align*}
\lim_{r \to \infty} \frac{a_{11}p^{(r)}_{11} + a_{12}p^{(r)}_{12}}{p^{(r)}_{11}+p^{(r)}_{12}} &= \lim_{r \to \infty} \frac{a_{21}p^{(r)}_{21} + a_{22}p^{(r)}_{22}}{p^{(r)}_{21}+p^{(r)}_{22}} \\
\lim_{r \to \infty} \frac{b_{11}p^{(r)}_{11} + b_{21}p^{(r)}_{21}}{p^{(r)}_{11}+p^{(r)}_{21}} &= \lim_{r \to \infty} \frac{b_{12}p^{(r)}_{12} + b_{22}p^{(r)}_{22}}{p^{(r)}_{12}+p^{(r)}_{22}}
\end{align*}


\begin{figure}[h]
\includegraphics[scale=0.6]{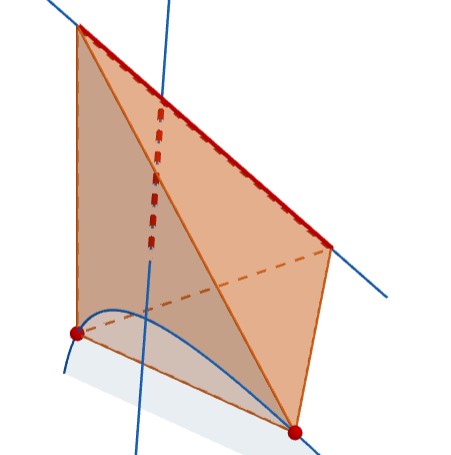}
\caption{For ``Bach or Stravinski'', the intersection (red) of the real part of the Spohn variety $\mathcal{V}$ (blue) with $\overline{\Delta}$ (orange) consists of two isolated points and two line segments (red). The isolated points are Nash equilibria but they are not DE in our sense, see Example~\ref{ex: Bach or Stravinski}.}\label{figure:DE-Bach}
\end{figure}

\begin{example}[Bach or Stravinski]\label{ex: Bach or Stravinski}
We use the payoff tables $(2,0,0,1)$ and $(1,0,0,2)$ as in~\cite[Section 3]{Spohn2007}, see also Figure~\ref{figure:DE-Bach}. The equations~(\ref{equation:limDE}) translate as
\begin{align*}
\lim_{r \to \infty} \frac{2p^{(r)}_{11}}{p^{(r)}_{11}+p^{(r)}_{12}} &= \lim_{r \to \infty} \frac{p^{(r)}_{22}}{p^{(r)}_{21}+p^{(r)}_{22}}, \\
\lim_{r \to \infty} \frac{p^{(r)}_{11}}{p^{(r)}_{11}+p^{(r)}_{21}} &= \lim_{r \to \infty} \frac{2p^{(r)}_{22}}{p^{(r)}_{12}+p^{(r)}_{22}}.
\end{align*}
 The two Nash equilibria $(1,0,0,0)$ and $(0,0,0,1)$ cannot be approximated by points in $\Delta$ satisfying the equations above. Indeed, assume to the contrary that the sequence $p^{(r)}$ converges to $(1,0,0,0)$ and the above equations are satisfied. Then, $\lim_{r \to \infty} p_{11}^{(r)} = 1$ and all other $p_{ij}^{(r)}$ converge to $0$. This means that $ \lim_{r \to \infty} \frac{2p^{(r)}_{11}}{p^{(r)}_{11}+p^{(r)}_{12}} = 2 = \lim_{r \to \infty} \frac{p^{(r)}_{22}}{p^{(r)}_{21}+p^{(r)}_{22}}$. Since all the $p_{ij}^{(r)}$ are strictly positive, it follows that $\frac{1}{2} = \lim_{r \to \infty} \frac{p^{(r)}_{21}+p^{(r)}_{22}}{p^{(r)}_{22}} = \lim_{r \to \infty} \frac{p^{(r)}_{21}}{p^{(r)}_{22}} + 1$. So $\lim_{r \to \infty} \frac{p^{(r)}_{21}}{p^{(r)}_{22}} = - \frac{1}{2}$ and hence this last sequence must get negative for $r$ sufficiently large. This is a contradiction to $p_{ij}^{(r)} > 0$.
\end{example}

We will now slightly generalize DE by also allowing limits from outside $\overline \Delta$. As a consequence, Nash equilibria will generically be DE. Then, we will describe the connection to the algebraic setting of the Spohn variety.
 Let $X$ be a game given by its payoff tensors $(X^{(i)}_{j_1\ldots j_n})$, $\mathcal{V} = \mathcal{V}_X \subseteq \mathbb{P}^{d_1\times \ldots \times d_n -1}$ the Spohn variety of $X$. Moreover, let $\mathcal{W} \subseteq \mathbb{P}^{d_1\times \ldots \times d_n -1}$ be the union of hyperplanes defined by the polynomial
\begin{align*}
s = \prod_{i,k} p_{+\ldots +k+\ldots +},
\end{align*}
where $i$ is the position in which $k$ appears.
Note, that $\mathcal{W}$ is exactly the union of those hyperplanes outside which all the conditional expected payoffs $\mathbb E_k^{(i)}(p)$ are well-defined.
For a subset $M \subseteq \C^{d_1\times \ldots \times d_n}$ (respectively $ \mathbb{P}^{d_1\times \ldots \times d_n -1}$ or $\R^{d_1\times \ldots \times d_n}$), we denote by $\overline{M}$ its usual euclidean closure, that is, the set of all points that are limits of sequences in $M$, and by $\zar{M}$ its Zariski closure, that is, the set of all points that are common zeros of all polynomials vanishing on $M$. It is a basic fact that $\overline{M} \subseteq \zar{M}$ because every algebraic set is closed in the euclidean topology.

\begin{definition}\label{def:de}
\begin{enumerate}
\item A \textit{dependency equilibrium} (DE) is a point $p$ in $\overline{\Delta}$ such that there exists a sequence $(p^{(r)})$ of complex tensors in the complement of $\mathcal{W}$ (that is, $s(p^{(r)}) \neq 0$ for all $r$) converging to $p$ such that the equations in (\ref{equation:limDE}) are satisfied.

\item An \textit{algebraic dependency equilibrium} (aDE) is a point in $\overline \Delta \cap \zar{(\mathcal{V} \setminus \mathcal{W})}$.
\end{enumerate}
\end{definition}

For all $(2 \times 2)$-games in~\cite[Section 3]{Spohn2007} (Matching pennies, Bach or Stravinski, Hawk and Dove, Prisoner's Dilemma), it can be easily checked that DE and aDE coincide.
For generic games, we can replace complex tensors by real tensors in Definition~\ref{def:de} and still deduce results analogous to the contents of this section, see Remark~\ref{remark:approximation-by-real-points}.

\begin{remark}\label{remark:definitionDE}
\begin{enumerate}

\item We could equivalently define aDE as points in $\overline{\Delta} \cap \overline{\mathcal{V} \setminus \mathcal{W}}$. This follows from the general fact that the Zariski closure and the Euclidean closure coincide for an open subset of a variety, see~\cite[Exposé XII, Prop. 2.2]{SGA1n}.

\item It follows from (1) that every aDE is a DE, because it can be approximated by a sequence in $\mathcal{V} \setminus \mathcal{W}$.

\end{enumerate}
\end{remark} 

\begin{remark}\label{remark:outsideW}
The set $\overline{\Delta} \cap (\mathcal{V} \setminus \mathcal{W})$ consists of all DE not in $\mathcal W$ because for points outside of $\mathcal{W}$ all the expressions $p_{+\ldots +k+ \ldots +}$ are non-zero.
\end{remark}

\begin{lemma}\label{lemma:DEinV(2x2)}
Every dependency equilibrium of a game $X$ lies on $\mathcal{V}_X$, the Spohn variety of $X$.
\end{lemma}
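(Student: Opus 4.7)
The plan is to show that, at a dependency equilibrium $p$, every $(2 \times 2)$-minor of every matrix $M_i(p)$ from Definition~\ref{def: Spohn variety} vanishes. To this end, let $p$ be a DE with an approximating sequence $(p^{(r)})$ in the complement of $\mathcal{W}$ that realizes the limit conditions (\ref{equation:limDE}). The key algebraic observation is that, although the defining minors of $\mathcal{V}_X$ are polynomials and the conditional expectations $\mathbb{E}^{(i)}_k$ are rational functions with potentially vanishing denominators, these minors can be written as an honest multiple of a difference of conditional expectations. Concretely, for each player $i$ and each pair $k, k' \in [d_i]$, I would write $S^{(i)}_k(p) := \sum X^{(i)}_{j_1 \cdots k \cdots j_n} p_{j_1 \cdots k \cdots j_n}$, so that the minor in rows $k,k'$ of $M_i$ reads $p_{+\cdots+k+\cdots+} S^{(i)}_{k'}(p) - p_{+\cdots+k'+\cdots+} S^{(i)}_{k}(p)$.

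The main computational step is then to observe that, at any point $q$ with $q_{+\cdots+k+\cdots+} \neq 0$ and $q_{+\cdots+k'+\cdots+} \neq 0$, this minor factors as
\[
q_{+\cdots+k+\cdots+} \cdot q_{+\cdots+k'+\cdots+} \cdot \bigl(\mathbb{E}^{(i)}_{k'}(q) - \mathbb{E}^{(i)}_k(q)\bigr).
\]
Applying this to $q = p^{(r)}$ (which lies outside $\mathcal{W}$, so all denominators are nonzero), I would then let $r \to \infty$. The first two factors are polynomials in the entries of $p^{(r)}$, hence converge to the corresponding expressions at $p$, which are finite. The last factor tends to $0$ by the defining property (\ref{equation:limDE}) of a DE. Hence the product tends to $0$. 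On the other hand, the original minor is itself a polynomial in the coordinates, so by continuity its value at $p^{(r)}$ tends to its value at $p$. Combining these two observations yields that the minor vanishes at $p$. Since this holds for all $i$, $k$, $k'$, the point $p$ lies on $\mathcal{V}_X$.

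I do not anticipate a genuine obstacle here: the argument is a continuity-plus-polynomial-identity calculation, and the factorization above is precisely what makes the cancellation between denominators from conditional expectations and denominators from the minors explicit. The only minor point of care is to ensure that passing between the affine setting (where $\overline{\Delta}$ lives and the limits are taken) and the projective setting (where $\mathcal{V}_X$ is defined) causes no ambiguity; this is fine because the minors are homogeneous in $p$, so vanishing at the sum-to-one representative is equivalent to vanishing at the projective point.
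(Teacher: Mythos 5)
Your proof is correct and follows essentially the same route as the paper: both clear the denominators in the limit conditions (\ref{equation:limDE}) and pass to the limit to conclude that the defining minors of $\mathcal{V}_X$ vanish at $p$. The only cosmetic difference is that the paper treats the case of a vanishing marginal $p_{+\cdots+k+\cdots+}=0$ separately (using non-negativity of the entries of $p$), whereas your explicit factorization of the minor handles that case uniformly by continuity.
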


\begin{proof}
Let $p$ be a DE of $X$ and $p^{(r)}$ a sequence in $\C^{d_1 \times \ldots \times d_n} \setminus \mathcal{W}$ such that $\sum p^{(r)}_{j_1\ldots j_n} =1$, $\lim_{r \to \infty} p^{(r)} = p$, and the equations~(\ref{equation:limDE}) are satisfied. We show that $p$ satisfies every defining equation of the Spohn variety. For each player $i$ and each pair $k,k'$ of pure strategies, we get the equation
\[
p_{+\ldots+k'+\ldots+} \sum_{j_1,\ldots, j_{i-1}, j_{i+1}, \ldots, j_n} X^{(i)}_{j_1 \ldots  k \ldots j_n} p_{j_1 \ldots  k \ldots j_n} 
=
p_{+\ldots+k+\ldots+} \sum_{j_1,\ldots, j_{i-1}, j_{i+1}, \ldots, j_n} X^{(i)}_{j_1 \ldots  k' \ldots j_n} p_{j_1 \ldots  k' \ldots j_n}.
\]

If $p_{+\ldots+k+\ldots+} = 0$ then all $p_{j_1\ldots k \ldots j_n}$ are $0$ and the equation above is trivially satisfied. The same is true for $k'$ in place of $k$. If, on the other hand, $p_{+\ldots+k+\ldots+} \neq 0$ and $p_{+\ldots+k'+\ldots+} \neq 0$ then the limits $\lim_{r\to \infty} p_{+\ldots+k+\ldots+}^{(r)} = p_{+\ldots+k+\ldots+} \neq 0$ and $\lim_{r\to \infty} p_{+\ldots+k'+\ldots+}^{(r)} = p_{+\ldots+k'+\ldots+} \neq 0$ and the equation above is equivalent to the corresponding equation in~(\ref{equation:limDE}).
\end{proof}

The following is now a direct consequence of Remark~\ref{remark:definitionDE}(2) and Lemma~\ref{lemma:DEinV(2x2)}.

\begin{theorem}\label{theorem:bounds}
For the set $\text{DE}(X)$ of DE of a game $X$, there are inclusions
\begin{align*}
\overline{\Delta} \cap \zar{\mathcal{V} \setminus \mathcal{W}} = \overline{\Delta} \cap \overline{\mathcal{V} \setminus \mathcal{W}} \subseteq \text{DE}(X) \subseteq \overline{\Delta} \cap \mathcal{V},
\end{align*}
where $\mathcal{V} = \mathcal{V}_X$ is the Spohn variety of $X$.
\end{theorem}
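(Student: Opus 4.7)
The statement packages three smaller assertions, each of which has been prepared in the remarks and lemmas just before it. My plan is simply to verify the three pieces in turn; no new ideas are needed.

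For the equality $\overline{\Delta} \cap \zar{\mathcal{V} \setminus \mathcal{W}} = \overline{\Delta} \cap \overline{\mathcal{V} \setminus \mathcal{W}}$, I would invoke Remark~\ref{remark:definitionDE}(1), which records that the Zariski and Euclidean closures of an open subset of a variety coincide (citing \cite[Exp.~XII, Prop.~2.2]{SGA1n}). Here $\mathcal{W}$ is a finite union of hyperplanes, hence $\mathcal{V} \setminus \mathcal{W}$ is Zariski open in $\mathcal{V}$, and the two closures agree. Intersecting with $\overline{\Delta}$ gives the desired equality; the only thing worth flagging is that openness must be taken inside $\mathcal{V}$, not in the ambient projective space.

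For the middle inclusion $\overline{\Delta} \cap \overline{\mathcal{V} \setminus \mathcal{W}} \subseteq \mathrm{DE}(X)$, let $p$ lie in the left-hand side. By definition it is the Euclidean limit of a sequence $(p^{(r)})$ in $\mathcal{V} \setminus \mathcal{W}$. Because each $p^{(r)}$ satisfies the $(2\times 2)$-minor equations defining $\mathcal{V}$ and none of the denominators $p_{+\ldots+k+\ldots+}^{(r)}$ vanish, the conditional expected payoffs $\mathbb{E}_k^{(i)}(p^{(r)})$ are well defined and equal across $k$ on the nose, not merely in the limit. Hence the limit equations~(\ref{equation:limDE}) are satisfied and $p$ is a DE; this is exactly the content of Remark~\ref{remark:definitionDE}(2).

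Finally, $\mathrm{DE}(X) \subseteq \overline{\Delta} \cap \mathcal{V}$ is immediate: $\mathrm{DE}(X) \subseteq \overline{\Delta}$ by Definition~\ref{def:de}(1), while $\mathrm{DE}(X) \subseteq \mathcal{V}$ is precisely Lemma~\ref{lemma:DEinV(2x2)}. Since all three components have been established earlier, I do not anticipate any genuine obstacle; the theorem is a clean bookkeeping statement that sandwiches $\mathrm{DE}(X)$ between an algebraically tractable lower bound and the real locus of the Spohn variety.
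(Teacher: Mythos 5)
Your proposal is correct and follows the paper's own route exactly: the paper derives the theorem as a direct consequence of Remark~\ref{remark:definitionDE} (parts (1) and (2)) together with Lemma~\ref{lemma:DEinV(2x2)}, which are precisely the three ingredients you assemble. Your observation that points of $\mathcal{V}\setminus\mathcal{W}$ satisfy the payoff equalities on the nose (so the limit equations hold for the approximating sequence) is the same justification the paper gives in Remark~\ref{remark:definitionDE}(2).
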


Theorem~\ref{theorem:bounds} shows that an investigation of DE from the perspective of algebraic geometry and computational commutative algebra is indeed sensible. Every DE is a positive real point on the Spohn variety $\mathcal{V}=\mathcal{V}_X$ and every strategy that lies on an irreducible component of the Spohn variety that contains a point, for which the rational expression  $\mathbb E^{(i)}_k(p)$ of DE are well-defined, is a DE. The union of all such irreducible components is written as~$\zar{\mathcal{V} \setminus \mathcal{W}}$ in Theorem~\ref{theorem:bounds}.
These bounds are very helpful from a computational perspective because they can, given enough time, be computed by a computer algebra program such as \texttt{Macaulay2}. An explicit sample of code for $(2\times 2)$-games is shown in Example~\ref{example:code} below.

We will see in Example~\ref{example:missing-component} and Section~\ref{section:(2x2)} that neither of the two
bounds of Theorem~\ref{theorem:bounds} does necessarily agree with the set of DE. However, they do agree for generic
games as we will show in Proposition~\ref{proposition:constructible}.

\begin{example}\label{example:code}
The following is code for computations with the computer algebra software \texttt{Macaulay2} \cite{grayson2002macaulay2} of lower and upper bounds for the set of DE of a $(2\times 2)$-game.

\texttt{R} is the polynomial ring whose variables are the four entries of a joint strategy of the players. \texttt{aij} and \texttt{bij} are the players' payoffs, below for ``Prisoner's Dilemma'', which can be arbitrarily changed. \texttt{V} is the defining ideal (i.e., the set of defining polynomials) of the Spohn variety and \texttt{W} is the defining ideal of the four planes (whose union is $\mathcal{W}$) on which the rational expressions for DE are not defined.\\
\begin{small}
\begin{quote}
\begin{verbatim}
R = QQ[p11,p12,p21,p22];
a11 = -2;
a21 = -1;
a12 = -10;
a22 = -5;
b11 = -2;
b21 = -10;
b12 = -1;
b22 = -5;
V = ideal((p11+p12)*(a21*p21+a22*p22) - (p21+p22)*(a11*p11+a12*p12),
          (p11+p21)*(b12*p12+b22*p22) - (p12+p22)*(b11*p11+b21*p21));
W = ideal((p11+p12)*(p21+p22)*(p11+p21)*(p12+p22));

componentsV = minimalPrimes(V);
lowerBound = saturate(V,W);
lowerBoundEqualsSpohn = (V == lowerBound);
componentsLowerBound = minimalPrimes(lowerBound);
\end{verbatim}
\end{quote}
\end{small}
\vspace{0.3cm}

\texttt{componentsV} lists all the systems of defining equations for the irreducible components of the Spohn variety. \texttt{lowerBound} computes the defining equations for the lower bound $\zar{\mathcal{V} \setminus \mathcal{W}}$. \texttt{lowerBoundEqualsSpohn} gives \texttt{true} or \texttt{false} according to whether $\zar{\mathcal{V} \setminus \mathcal{W}} = \mathcal{V}$ or not. In particular, in case \texttt{true}, the set of DE equals the Spohn variety. Finally, \texttt{componentsLowerBound} gives the list of all the systems of defining equations for the irreducible components of $\zar{\mathcal{V} \setminus \mathcal{W}}$. Note that this is a sublist of \texttt{componentsV}.
\end{example}

\begin{example}\label{example:missing-component}
We consider the game
\[
\begin{pmatrix}
1 & 1\\
2 & -3
\end{pmatrix} \text{ and }
\begin{pmatrix}
-1 & 0\\
0 & 2
\end{pmatrix}
\]
from the class of games introduced in Example~\ref{example:special}. The real part of its Spohn variety is shown in Figure~\ref{figure:missing-component}. It is the union of two irreducible plane conics which intersect the set of all strategies $\overline{\Delta}$ exactly in the four pure strategies. 

One of the irreducible components is contained in the exceptional planes whose union is $\mathcal{W}$. Hence, $\zar{\mathcal{V} \setminus \mathcal{W}} \neq \mathcal{V}$. The two pure strategies $(1,0,0,0)$ and $(0,0,1,0)$ lie in $\zar{\mathcal{V} \setminus \mathcal{W}} \cap \overline{\Delta}$ and are, therefore, DE. The other two pure strategies might be DE because they lie in $\mathcal{V} \cap \overline{\Delta}$ but further considerations are needed.
\end{example}

\begin{figure}[h]
\includegraphics[scale=0.65]{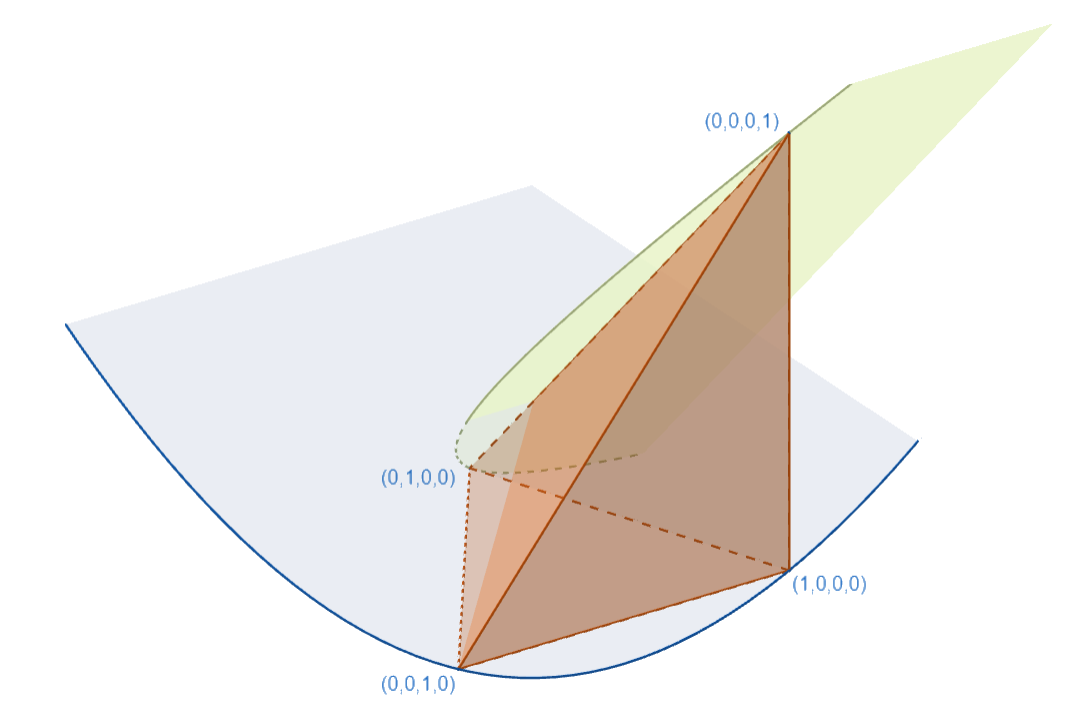}
\caption{The real part of the Spohn variety of the $(2\times 2)$-game in Example~\ref{example:missing-component}. The lower bound $\zar{\mathcal{V} \setminus \mathcal{W}}$ is given by the blue line. But the Spohn variety has another irreducible component (green).}\label{figure:missing-component}
\end{figure}

\begin{remark}\label{remark:approximation-by-real-points}
    Theorem~\ref{theorem:bounds} is still valid for generic games when one replaces complex tensors by real tensors in Definition~\ref{def:de}(1). In other words, for generic games $X$, the following three statements are equivalent for a strategy $p$:
    \begin{enumerate}
        \item[(a)] $p$ is a DE of $X$ in the sense of Definition~\ref{def:de}(1).
        \item[(b)] There exists a sequence $(p^{(r)})$ of real tensors in the complement of $\mathcal{W}$ (that is, $s(p^{(r)}) \neq 0$ for all $r$) converging to $p$ such that the equations in (\ref{equation:limDE}) are satisfied.
        \item[(c)] $p$ lies on the Spohn variety of $X$.
    \end{enumerate}
\end{remark}

\begin{proof}
    For generic games, the Spohn variety $\mathcal{V}$ is irreducible and not contained in $\mathcal{W}$ by~\cite[Theorem 3.2]{Irem-Bernd} and Proposition~\ref{proposition:constructible}. Hence, (a) and (c) are equivalent. Moreover, the implication from (b) to (a) is trivial.

    For the remaining implication from (a) to (b), it is sufficient to show that $\overline{\mathcal{V} \setminus \mathcal{W}} \cap \overline{\Delta}$ (which coincides with $\text{DE}(G)$ for generic games) is contained in the set $\text{DE}_\R(G)$ of all points satisfying the statement in (b). For this, it suffices to show the equality
    \begin{equation}\label{equation:real}
        \overline{\mathcal V \setminus \mathcal W} \cap \R^s = \overline{(\mathcal V \setminus \mathcal W) \cap \R^s},
    \end{equation}
    where $s$ is the dimension of the ambient space. Indeed, if we intersect the right side of~(\ref{equation:real}) with $\overline{\Delta}$ then the resulting set is contained in $\text{DE}_\R(G)$, because $(\mathcal{V} \setminus \mathcal{W}) \cap \R^s$ consists of real tensors satisfying the defining rational equations for DE.

   To show~(\ref{equation:real}), note that $\overline{\mathcal{V}(\R) \setminus \mathcal{W}(\R)}$ is a real variety contained in the real variety $\overline{\mathcal{V} \setminus \mathcal{W}} \cap \R^s$ and both have the same dimension because $\mathcal{V}(\R)$ lies Zariski dense in $\mathcal{V}$ by Theorem~\ref{theorem:realdense}. Therefore, they are the same. Also, $\mathcal{V}(\R) \setminus \mathcal{W}(\R) = (\mathcal{V} \setminus \mathcal{W}) \cap \R^s$ is always true for set-theoretical reasons and, therefore,~(\ref{equation:real}) follows.
\end{proof}

\begin{remark}
    That~(\ref{equation:real}) is not true in general for an algebraic variety $\mathcal{V}$ and a union of hyperplanes $\mathcal{W}$ can be seen by the following example: Let $\mathcal{V}$ be an irreducible complex surface of degree greater than $1$ whose real part is just a line. For instance, consider the surface in complex affine $3$-space defined by the equation
    \[
    2x^2+2xy+y^2-2xz-2yz+z^2-2x+1 = 0.
    \]
    Its set of real points is the line defined by
    \[
    y-z+1 = x-1 = 0.
    \]

    If now $\mathcal{W}$ is a hyperplane containing this line then $\overline{\mathcal{V} \setminus \mathcal{W}} = \mathcal{V}$ and hence $\overline{\mathcal{V} \setminus \mathcal{W}} \cap \R^s = \mathcal{V}(\R)$ which is the line. But $(\mathcal{V} \setminus \mathcal{W}) \cap \R^s$ is the empty set, so~(\ref{equation:real}) does not hold.
\end{remark}

\begin{remark}\label{remark:constructible}
In view of Theorem~\ref{theorem:bounds}, it might be interesting to understand for which games the equality $\zar{\mathcal{V} \setminus \mathcal{W}} = \mathcal{V}$ holds. Indeed, in these cases, the set of DE coincides with $\overline{\Delta} \cap \mathcal{V}$.
In general, $\zar{\mathcal{V} \setminus \mathcal{W}} = \mathcal{V}$ if and only if no irreducible component of $\mathcal{V}$ is contained in $\mathcal{W}$. 

A game $X$ (of a fixed size) given in normal form by its payoff tables $(X^{(i)}_{j_1\ldots j_n})_{(j_1,\ldots,j_n) \in d_1 \times \ldots \times d_n}$, $ i = 1,\ldots,n$, is nothing else than a point in the affine space $\R^{n\times d_1\times \ldots \times d_n} \subseteq \C^{n\times d_1\times \ldots \times d_n}$. 
So we could ask for a structural result on the set of all affine points $X \in \R^{n\times d_1\times \ldots \times d_n}$ such that 
$\mathcal{V}_X \setminus \mathcal{W}$ lies (Zariski) dense inside $\mathcal{V}_X$.
\end{remark}

\begin{proposition}\label{proposition:constructible}
The set $\mathcal{X}$ of all games $X\in \R^{n\times d_1\times \ldots \times d_n}$ such that $\zar{\mathcal{V}_X \setminus \mathcal{W}} = \mathcal{V}_X$ (which implies that the set of DE of $X$ coincides with the non-negative real part of $\mathcal{V}_X$) is the complement of an affine variety in $\R^{n\times d_1\times\ldots \times d_n}$. In particular, it is a dense subset of $\R^{n\times d_1\times\ldots \times d_n}$.
\end{proposition}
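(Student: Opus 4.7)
My plan is to show that $\mathcal{X}^c$ is Zariski closed in $\mathbb{A} := \R^{n \times d_1 \times \cdots \times d_n}$ and that $\mathcal{X}$ is nonempty; density then follows since $\mathbb{A}$ is irreducible. By Remark~\ref{remark:constructible}, $X \in \mathcal{X}$ iff no irreducible component of $\mathcal{V}_X$ is contained in $\mathcal{W} = \bigcup_{(i,k)} H_{i,k}$, where $H_{i,k} := \{p_{+\cdots +k+\cdots+} = 0\}$. Writing $\mathcal{X}^c = \bigcup_{(i,k)} \mathcal{X}_{i,k}$ with $\mathcal{X}_{i,k} := \{X \in \mathbb{A} : \text{some irreducible component of } \mathcal{V}_X \text{ lies in } H_{i,k}\}$, it suffices to show each $\mathcal{X}_{i,k}$ is Zariski closed.

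The main tool is the universal Spohn variety $\tilde{\mathcal{V}} \subseteq \mathbb{A} \times \mathbb{P}^s$ cut out by the universal $2 \times 2$ minors of the Spohn matrices; these minors are jointly polynomial in $(X, p)$, so $\tilde{\mathcal{V}}$ is algebraic, and the projection $\pi : \tilde{\mathcal{V}} \to \mathbb{A}$ is proper. Setting $\tilde{Y}_{i,k} := \tilde{\mathcal{V}} \cap (\mathbb{A} \times H_{i,k})$, a component of $\mathcal{V}_X$ lies in $H_{i,k}$ iff $\mathcal{V}_X \cap H_{i,k}$ has an irreducible component of the same dimension as some component of $\mathcal{V}_X$ (since an irreducible subvariety of an irreducible variety of the same dimension must coincide with it).

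I would proceed by stratifying $\mathbb{A}$ by the Hilbert polynomials of the fibers of $\pi$ and $\pi|_{\tilde{Y}_{i,k}}$, a finite constructible decomposition via flattening stratifications over a Noetherian base, on whose locally closed strata the multiset of dimensions of irreducible components of $\mathcal{V}_X$ and of $\mathcal{V}_X \cap H_{i,k}$ is locally constant. The condition defining $\mathcal{X}_{i,k}$ is then locally constant on each stratum, so $\mathcal{X}_{i,k}$ is a union of strata. Closedness can then be verified by upper semicontinuity of fiber dimension: if $X_\nu \to X_0$ is a specialization with $X_\nu \in \mathcal{X}_{i,k}$, then properness of $\pi|_{\tilde{Y}_{i,k}}$ ensures that a limiting irreducible component in $H_{i,k}$ specializes to a component of $\mathcal{V}_{X_0}$ lying in $H_{i,k}$, so $X_0 \in \mathcal{X}_{i,k}$. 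The main technical obstacle is controlling how components of $\mathcal{V}_X$ may merge, split, or gain embedded structure under specialization; the Hilbert polynomial stratification is precisely the tool that records the scheme-theoretic invariants persistent under such degenerations.

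For nonemptiness, Theorem 3.2 of~\cite{Irem-Bernd} produces a dense open set of games with irreducible $\mathcal{V}_X$; the explicit game in Example~\ref{example:one-generic-one-reducible}(a) has $\mathcal{V}_X$ an irreducible curve in $\mathbb{P}^3$ containing totally mixed strategies, hence not contained in any $H_{i,k}$, so this game lies in $\mathcal{X}$. Therefore $\mathcal{X}$ is a nonempty Zariski open subset of the irreducible affine space $\mathbb{A}$, and hence dense. The equality $\text{DE}(X) = \overline{\Delta} \cap \mathcal{V}_X$ for $X \in \mathcal{X}$ is immediate from Theorem~\ref{theorem:bounds}.
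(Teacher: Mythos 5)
Your overall strategy---pass to the universal family of Spohn varieties over the affine space of games and control how the fibrewise condition varies---is the same as the paper's, which applies \cite[Prop.~9.5.3]{EGAIV-part3} to the family $T \to S$ with $Z = T\setminus\mathcal{W}$, $Z' = T$ to get constructibility of $\mathcal{X}$ in one step, and then exhibits a Zariski open subset of $\mathcal{X}$ (the locus where $\mathcal{V}_X$ meets every component of $\mathcal{W}$ transversally, read off from the Jacobian in Remark~\ref{remark:Jacobian}). Where your argument genuinely breaks down is in the attempt to upgrade to closedness of the sets $\mathcal{X}_{i,k}$. The flattening stratification does not make the irreducible-component structure of the fibres locally constant: flatness fixes the Hilbert polynomial, but a flat family of plane conics can degenerate from an irreducible conic to a union of two lines with the same Hilbert polynomial, so ``the multiset of dimensions of irreducible components is locally constant on the strata'' is false. (Constructibility of the locus where a fibre has a prescribed component structure is true, but it needs a different statement from EGA~IV~\S9, not the flattening stratification.) Relatedly, your criterion ``$\mathcal{V}_X\cap H_{i,k}$ has a component of the same dimension as \emph{some} component of $\mathcal{V}_X$'' is too weak; you need the dimension of the component of $\mathcal{V}_X$ \emph{containing} it, since a proper hyperplane section of a large component can accidentally match the dimension of a smaller component.

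The more serious gap is the specialization step. If $X_\nu \to X_0$ and each $\mathcal{V}_{X_\nu}$ has a component $C_\nu \subseteq H_{i,k}$, properness of $\pi|_{\tilde Y_{i,k}}$ only gives that the limit of the $C_\nu$ is a closed subset of $\mathcal{V}_{X_0}\cap H_{i,k}$ of at least the same dimension; it need not be an irreducible \emph{component} of $\mathcal{V}_{X_0}$. Components can be absorbed under specialization into a strictly larger component of the special fibre that is not contained in $H_{i,k}$, in which case $X_0\notin\mathcal{X}_{i,k}$. So closedness of $\mathcal{X}_{i,k}$ does not follow, and with it you lose both the openness of $\mathcal{X}$ and the density claim (you only establish that $\mathcal{X}$ is nonempty, not that it contains a dense open set). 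To repair the argument along the paper's lines, you should settle for constructibility of $\mathcal{X}$ and then separately produce an explicit Zariski open subset contained in $\mathcal{X}$, e.g.\ via the transversality condition coming from the Jacobian computation; the identification of $\mathrm{DE}(X)$ with the non-negative real part of $\mathcal{V}_X$ for $X\in\mathcal{X}$ via Theorem~\ref{theorem:bounds} is fine as you state it.
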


\begin{proof}
We use~\cite[Prop. 9.5.3]{EGAIV-part3}. We consider the $\C$-algebras 
\[
\mathcal{A} = \C[X^{(i)}_{j_1\ldots j_n} \mid i \in \{1,\ldots,n\}, (j_1,\ldots,j_n) \in [d_1] \times \ldots \times [d_n]]
\]
and
\[
\mathcal{B} = \frac{\C[X^{(i)}_{j_1\ldots j_n}, p_{j_1,\ldots,j_n} \mid i \in \{1,\ldots,n\}, (j_1,\ldots,j_n) \in [d_1] \times \ldots \times [d_n]]}{\mathcal{I}_X},
\]
where the $X^{(i)}_{j_1\ldots j_n}$ are variables for the entries of the payoff tables, the $ p_{j_1,\ldots,j_n}$ are variables for the entries of a strategy and $\mathcal{I}_X$ is the ideal generated by the polynomials of the defining equation of the Spohn variety (with variable payoff tables). A $\C$-algebra homomorphism $\mathcal{A} \to \mathcal{B}$ is given by composing the canonical inclusion 
\[
\mathcal{A} \to \C[X^{(i)}_{j_1\ldots j_n}, p_{j_1,\ldots,j_n} \mid i \in \{1,\ldots,n\}, (j_1,\ldots,j_n) \in [d_1] \times \ldots \times [d_n]]
\]
with the projection
\[
\C[X^{(i)}_{j_1\ldots j_n}, p_{j_1,\ldots,j_n} \mid i \in \{1,\ldots,n\}, (j_1,\ldots,j_n) \in [d_1] \times \ldots \times [d_n]] \to \mathcal{B}.
\]
From this, we get an induced finitely presented morphism $f$ of affine schemes $T := \Spec(\mathcal{B}) \to \Spec(\mathcal{A}) =: S$ and the fibre (intersected with $\Max(\mathcal{B})$) of a point $X \in \C^{n\times d_1\times \ldots \times d_n} = \Max(\mathcal{A})$ is the Spohn variety of $X$. So, $f$ parametrizes the family of all Spohn varieties along the set of all games of the fixed size $n\times d_1 \times \ldots \times d_n$.

We derive the closed subscheme $\mathcal W$ of $X$ from the projection $\mathcal{B} \to \mathcal{B}/(s)$, where \[s = \prod_{i,k} p_{+\ldots +k+\ldots +}.\]
We set $Z = T \setminus \mathcal{W}$ and $Z' = T$. Now, for any $X \in \Max(\mathcal{A})$, $Z' \cap f^{-1}(X) = \mathcal{V}_X$ and $Z \cap f^{-1}(X) = \mathcal{V}_X \setminus \mathcal{W}$. With this in mind, the set $\mathcal{X}$ of the games in question is a constructible set by~\cite[Prop. 9.5.3]{EGAIV-part3}, using exactly the same notation. 

It is immediate from the computations of the Jacobian matrix of $\mathcal{V}_X$ in Remark~\ref{remark:Jacobian} that the set of all games $X$ such that $\mathcal{V}_X$ intersects all irreducible components of $\mathcal{W}$ transversally is Zariski open. But this set is clearly contained in $\mathcal{X}$. So $\mathcal{X}$ is a constructible set in an affine space containing a Zariski open set and it is therefore itself Zariski open, that is, the complement of an affine variety. 
\end{proof}

By Definition~\ref{def: Spohn variety}, every pure strategy lies on the Spohn variety. An immediate observation and an extension of \cite[Theorem 3.2]{Irem-Bernd} is that every pure
strategy is generically a DE.

\begin{problem}\label{problem:constructible}
For fixed $n$ and $d_1,\ldots,d_n$, describe the Zariski open set $\mathcal{X}$ (see Prop.~\ref{proposition:constructible}), that is, give the defining equations of its complement.
\end{problem}

The solution of this problem for $(2\times 2)$-games is Theorem~\ref{theorem:2x2}.

\subsection*{Nash equilibria and the Spohn variety}
While Theorem~\ref{theorem:NEonSpohn} shows that every Nash equilibrium lies on the Spohn variety, Example~\ref{example:NEnotDE} shows that not
every Nash equilibrium is a DE. However, Corollary~\ref {corollary:NEisDEgeneric} gives a criterion for when a Nash equilibrium is a DE which is generically satisfied. Note that the totally mixed Nash equilibria are exactly the positive real points that lie both on the Spohn variety and on the Segre variety by a result of Sturmfels and the first author~\cite[Theorem 3.2]{Irem-Bernd}. In particular, totally mixed Nash equilibria are always DE.

By $\Delta_{d-1}$ we denote the $(d-1)$-dimensional standard simplex. It can be considered as a subset of a $d$-dimensional affine space or of a $(d-1)$-dimensional projective space.

\begin{figure}[h]
\includegraphics[scale=0.6]{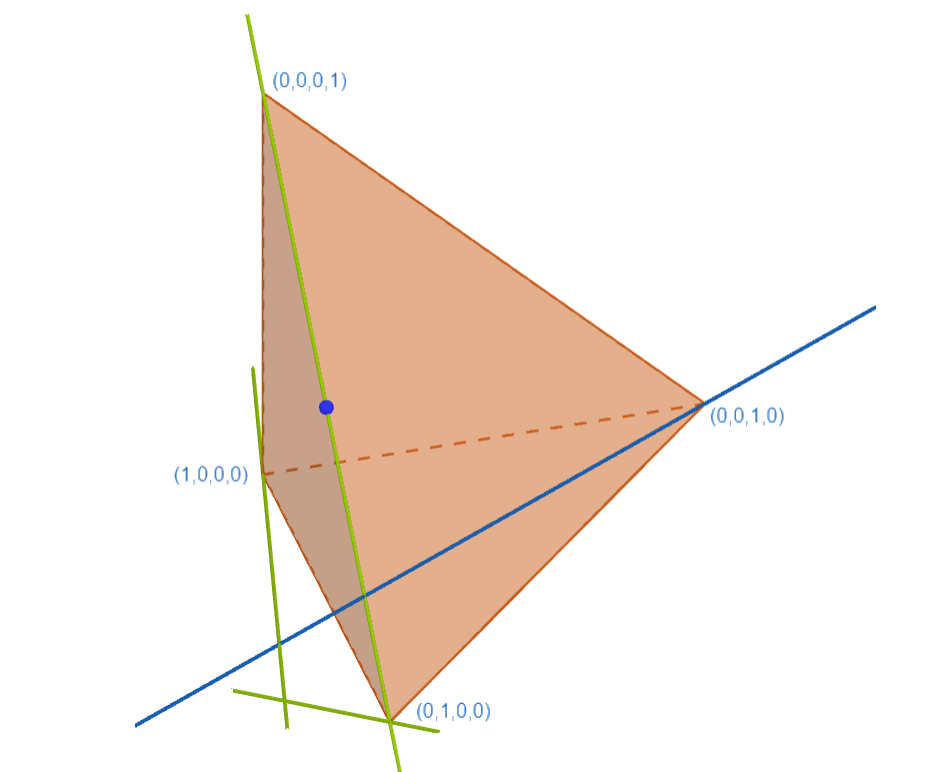}
\caption{
 The real part of the Spohn variety consisting of four lines for the game with payoff tables
$
\begin{pmatrix}
-1 & -1\\
-2 & 3
\end{pmatrix} \text{ and }
\begin{pmatrix}
-3 & 0\\
0 & 0
\end{pmatrix}$.
The lower bound $\zar{\mathcal{V} \setminus \mathcal{W}}$ for DE equals the blue component. By Example~\ref{example:NEnotDE}, the blue point is a Nash equilibrium but not a dependency equilibrium. This is only possible because it lies on one of the green components of $\mathcal{V}$, see Corollary~\ref{corollary:NEisDEgeneric}. These are exactly the components of $\mathcal{V}$ that are contained in $\mathcal{W}$.}\label{figure:NE}
\end{figure}

\begin{lemma}\label{lemma:rank-one}
A rank-one point $p=(p_1^{(1)},\ldots,p_{d_1}^{(1)}, \ldots \ldots,p_1^{(n)},\ldots,p_{d_n}^{(n)}) \in \Delta_{d_1 - 1} \times \ldots \times \Delta_{d_n - 1} \subseteq \overline{\Delta}$ lies on the Spohn variety of the game $X$ if and only if 
\begin{equation}\label{equation:non-zero}
\sum_{j_1 = 1}^{d_1} \ldots \sum_{j_{i-1} = 1}^{d_{i-1}} \sum_{j_{i+1} = 1}^{d_{i+1}} \ldots \sum_{j_n = 1}^{d_n} \left(X^{(i)}_{j_1\ldots j_{i-1} k j_{i+1} \ldots j_n} - X^{(i)}_{j_1\ldots j_{i-1} k' j_{i+1} \ldots j_n}\right) p_{j_1}^{(1)} \cdots p_{j_{i-1}}^{(i-1)} p_{j_{i+1}}^{(i+1)} \cdots p_{j_n}^{(n)} = 0
\end{equation}
for all $i \in \{1,\ldots,n\}$ and all $k,k' \in \{1,\ldots,d_i\}$ with $p_k^{(i)} >0$ and $p_{k'}^{(i)} > 0$.
\end{lemma}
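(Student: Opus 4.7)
My plan is to substitute the rank-one factorization $p_{j_1\ldots j_n} = p_{j_1}^{(1)} \cdots p_{j_n}^{(n)}$ directly into the $2\times 2$ minors defining $\mathcal{V}_X$ and simplify. The defining equations of the Spohn variety coming from the matrix $M_i$ are, for each pair $k, k' \in [d_i]$,
\begin{equation*}
p_{+\cdots+k+\cdots+}\!\! \sum_{\hat{j_i}} X^{(i)}_{j_1 \cdots k' \cdots j_n} p_{j_1 \cdots k' \cdots j_n} \;=\; p_{+\cdots+k'+\cdots+}\!\! \sum_{\hat{j_i}} X^{(i)}_{j_1 \cdots k \cdots j_n} p_{j_1 \cdots k \cdots j_n},
\end{equation*}
where the hat means that the $i$th sum is omitted. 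So the proof amounts to checking that, after the rank-one substitution, each of these equations becomes equivalent to the corresponding instance of (\ref{equation:non-zero}) up to a factor of $p_k^{(i)} p_{k'}^{(i)}$.

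First, I would compute the marginals. Using that each $p^{(\ell)}$ lies in the simplex $\Delta_{d_\ell - 1}$, i.e.\ $\sum_{j_\ell} p_{j_\ell}^{(\ell)} = 1$, the substitution yields
\begin{equation*}
p_{+\cdots+k+\cdots+} \;=\; p_k^{(i)} \prod_{\ell \neq i} \Bigl(\sum_{j_\ell = 1}^{d_\ell} p_{j_\ell}^{(\ell)}\Bigr) \;=\; p_k^{(i)}.
\end{equation*}
Similarly, setting $E_k^{(i)} := \sum_{\hat{j_i}} X^{(i)}_{j_1 \cdots k \cdots j_n} \prod_{\ell \neq i} p_{j_\ell}^{(\ell)}$ for the expected payoff of pure strategy $k$ against the mixed profile, one obtains $\sum_{\hat{j_i}} X^{(i)}_{j_1 \cdots k \cdots j_n} p_{j_1 \cdots k \cdots j_n} = p_k^{(i)} \, E_k^{(i)}$, and analogously for $k'$.

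Plugging these back in, the $(k,k')$-equation of the Spohn ideal reduces to
\begin{equation*}
p_k^{(i)} p_{k'}^{(i)} \bigl(E_{k'}^{(i)} - E_k^{(i)}\bigr) \;=\; 0.
\end{equation*}
Hence this equation is automatic whenever $p_k^{(i)} = 0$ or $p_{k'}^{(i)} = 0$, and otherwise is equivalent to $E_k^{(i)} = E_{k'}^{(i)}$, which is precisely condition (\ref{equation:non-zero}). Taking the conjunction over all $i$ and all pairs $k, k'$ gives the claimed equivalence.

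I do not anticipate a genuine obstacle here; the lemma is essentially a bookkeeping observation. The only subtlety worth stating explicitly is that the equations corresponding to pairs $(k, k')$ where one of $p_k^{(i)}, p_{k'}^{(i)}$ vanishes are trivially satisfied on a rank-one point, which is why (\ref{equation:non-zero}) is only required for pairs with both factors positive.
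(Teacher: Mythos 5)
Your proposal is correct and follows essentially the same route as the paper's own proof: substitute the rank-one factorization, observe that the marginal $p_{+\cdots+k+\cdots+}$ collapses to $p_k^{(i)}$ by the simplex normalization, and note that each minor factors as $p_k^{(i)}p_{k'}^{(i)}$ times the difference of expected payoffs. The only difference is that you carry out the final reduction explicitly, whereas the paper delegates it to the proof of \cite[Theorem 3.2]{Irem-Bernd}.
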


\begin{proof}
To recover the notation for general tensors on the Spohn variety, we set $p_{j_1 \ldots j_n} = p_{j_1}^{(1)} \cdots p_{j_n}^{(n)}$. Note that $p_{+\ldots+k+\ldots+} = p_k^{(i)}$ for all $i$ and $k$. Let $i \in \{1,\ldots,n\}$ and $k,k' \in \{1,\ldots,d_i\}$. If $p_k^{(i)} =0$ or $p_{k'}^{(i)}=0$ then the corresponding defining equation for the Spohn variety is trivial. So, assume that both are non-zero. Then the corresponding defining equation of the Spohn variety boils down for rank-one tensors to (\ref{equation:non-zero}), as noted in the proof of~\cite[Theorem 3.2]{Irem-Bernd}.
\end{proof}

\begin{theorem}\label{theorem:NEonSpohn}
Every Nash equilibrium of a game $X$ lies on $\mathcal{V}_X$, the Spohn variety of $X$.
\end{theorem}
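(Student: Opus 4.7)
The plan is to leverage Lemma~\ref{lemma:rank-one}, which characterizes when a rank-one point lies on the Spohn variety, and combine it with the standard indifference property of Nash equilibria.

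First, I would recall that a Nash equilibrium is, by construction, a product distribution $p = (p_{j_1}^{(1)}, \ldots, p_{j_n}^{(n)})$ with $p_{j_1 \ldots j_n} = p_{j_1}^{(1)} \cdots p_{j_n}^{(n)}$, hence a rank-one tensor in $\Delta_{d_1 - 1} \times \cdots \times \Delta_{d_n - 1} \subseteq \overline{\Delta}$. This means the hypothesis of Lemma~\ref{lemma:rank-one} applies, and I only need to check the simplified equations~(\ref{equation:non-zero}) for each player $i$ and each pair $k, k' \in [d_i]$ with $p_k^{(i)} > 0$ and $p_{k'}^{(i)} > 0$.

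Next, I would invoke the classical characterization of Nash equilibria: a strategy profile $p$ is a Nash equilibrium if and only if, for each player $i$, all pure strategies in the support of $p^{(i)}$ yield the same (maximal) expected payoff against the opponents' marginal strategies. Concretely, for $k, k'$ both in the support of $p^{(i)}$,
\begin{equation*}
\sum_{j_1, \ldots, \widehat{j_i}, \ldots, j_n} X^{(i)}_{j_1 \ldots k \ldots j_n} \, p_{j_1}^{(1)} \cdots \widehat{p_{\cdot}^{(i)}} \cdots p_{j_n}^{(n)} = \sum_{j_1, \ldots, \widehat{j_i}, \ldots, j_n} X^{(i)}_{j_1 \ldots k' \ldots j_n} \, p_{j_1}^{(1)} \cdots \widehat{p_{\cdot}^{(i)}} \cdots p_{j_n}^{(n)}.
\end{equation*}
Rearranging, this is precisely equation~(\ref{equation:non-zero}), so Lemma~\ref{lemma:rank-one} yields $p \in \mathcal{V}_X$.

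I do not expect any serious obstacle here; the proof is essentially a clean bookkeeping exercise once Lemma~\ref{lemma:rank-one} is in hand. The only subtlety worth flagging is that the Nash indifference condition is only guaranteed between strategies in the support, which is exactly the restriction built into the statement of Lemma~\ref{lemma:rank-one}, so everything lines up without further case analysis. If desired, one could close by remarking that this clean matching explains why boundary Nash equilibria can still fail to be DE (as in Example~\ref{example:NEnotDE}): they always satisfy the polynomial equations of $\mathcal{V}_X$, but may lie on an irreducible component entirely contained in $\mathcal{W}$, so the limiting procedure in Definition~\ref{def:de} breaks down.
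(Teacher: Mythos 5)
Your proof is correct and follows essentially the same route as the paper: both reduce to Lemma~\ref{lemma:rank-one} for the rank-one tensor given by a Nash equilibrium and then observe that the classical indifference conditions on the support (which the paper cites from \cite[Theorem 6.6]{solving}) are exactly the equations~(\ref{equation:non-zero}). The handling of the off-support pairs via the hypothesis of Lemma~\ref{lemma:rank-one} matches the paper's treatment as well.
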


\begin{proof}
We use the notation of Lemma~\ref{lemma:rank-one} for the given Nash equilibrium $p$ of $X$.  If $p_k^{(i)} =0$ or $p_{k'}^{(i)}=0$ then the corresponding defining equation for the Spohn variety is trivial. In the case that both are non-zero, it is a simple computation that the equations of~\cite[Theorem 6.6]{solving} are equivalent to (\ref{equation:non-zero}), and hence $p$ lies on $\mathcal{V}_X$.
\end{proof}

\begin{corollary}\label{corollary:NEisDEgeneric}
If the variety $\mathcal{W}$ contains no irreducible component of the Spohn variety $\mathcal{V}_X$ of a game $X$ then every Nash equilibrium of $X$ is a dependency equilibrium of $X$.
In particular, when $X$ is generic, every Nash equilibrium of $X$ is a dependency equilibrium of $X$.
\end{corollary}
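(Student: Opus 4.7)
The plan is to bootstrap the corollary from Theorem~\ref{theorem:NEonSpohn} and the sandwich of Theorem~\ref{theorem:bounds}. Given a Nash equilibrium $p$ of $X$, Theorem~\ref{theorem:NEonSpohn} places $p$ on the Spohn variety $\mathcal{V}_X$, and because $p$ is a joint probability distribution we automatically have $p \in \overline{\Delta}$. Hence $p$ already lies in the upper bound $\overline{\Delta} \cap \mathcal{V}_X$ appearing in Theorem~\ref{theorem:bounds}.

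Next, I would invoke the observation recorded in Remark~\ref{remark:constructible}: the equality $\zar{\mathcal{V}_X \setminus \mathcal{W}} = \mathcal{V}_X$ holds if and only if no irreducible component of $\mathcal{V}_X$ is contained in $\mathcal{W}$. Under the hypothesis of the corollary, this equivalence forces the lower and upper bounds of Theorem~\ref{theorem:bounds} to coincide, giving $\text{DE}(X) = \overline{\Delta} \cap \mathcal{V}_X$. Combining this with the first step yields $p \in \text{DE}(X)$, which proves the main implication.

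For the \emph{in particular} clause, I would appeal to Proposition~\ref{proposition:constructible}, which guarantees that $\zar{\mathcal{V}_X \setminus \mathcal{W}} = \mathcal{V}_X$ holds for all games $X$ in a Zariski open, hence dense, subset of $\R^{n\times d_1 \times \ldots \times d_n}$. The argument of the previous paragraph then applies verbatim to any such generic~$X$.

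There is no real technical obstacle here; the corollary is essentially an assembly of already established pieces. The only point requiring attention is to identify Remark~\ref{remark:constructible} as the correct bridge between the hypothesis phrased in terms of irreducible components and the equality of the algebraic bounds in Theorem~\ref{theorem:bounds}. Once that link is made explicit, the proof reduces to a one-line consequence.
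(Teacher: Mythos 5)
Your proposal is correct and follows exactly the same route as the paper, which proves the corollary by combining Theorem~\ref{theorem:NEonSpohn} with Remark~\ref{remark:constructible} (and hence Theorem~\ref{theorem:bounds}), with Proposition~\ref{proposition:constructible} supplying the genericity claim. You have simply spelled out the chain of implications that the paper leaves implicit.
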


\begin{proof}
This follows from Theorem~\ref{theorem:NEonSpohn} in combination with Remark~\ref{remark:constructible}.
\end{proof}

\subsection*{Existence of pure and totally mixed DE}

We now employ an easy idea from algebraic geometry to find large collections of games that admit infinitely many totally mixed DE. Our method will then guarantee, in addition, the existence of a pure DE.

\begin{remark}\label{remark:Jacobian}
Let $\mathcal{V}$ be the Spohn variety of a game given by its payoff tensors $(X^{(i)}_{j_1\ldots j_n})$. The rows of the Jacobian matrix $\mathcal{J}(p)$ of $\mathcal{V}$ at a point $p = (p_{j_1\ldots j_n})$ has row indices $(i,k,k')$ with $1 \leq i \leq n$ and $1 \leq k < k' \leq d_i$ (corresponding to the defining equations of $\mathcal{V}$) and column indices $(r_1,\ldots,r_n)$ where $1 \leq r_j \leq d_j$ (corresponding to the partial derivative at the variable $p_{r_1\ldots r_n}$). The entry with this row and column index is $0$ unless $r_i \in \{k,k'\}$. In these cases, it is
\[
p_{+\ldots+k+\ldots+} X^{(i)}_{r_1\ldots k' \ldots r_n} - \sum_{ j_1,\ldots, j_{i-1}, j_{i+1}, \ldots, j_n } X^{(i)}_{j_1\ldots k \ldots j_n} p_{j_1\ldots k \ldots j_n}
\]
and
\[
\sum_{ j_1,\ldots, j_{i-1}, j_{i+1}, \ldots, j_n } X^{(i)}_{j_1\ldots k' \ldots j_n} p_{j_1\ldots k' \ldots j_n} - p_{+\ldots+k'+\ldots+} X^{(i)}_{r_1\ldots k \ldots r_n},
\]
respectively. 

If $p$ is now the pure strategy that has a $1$ at position $(j_1,\ldots,j_{i-1},k',j_{i+1},\ldots,j_n)$ and $0$ elsewhere then an entry of $\mathcal{J}(p)$ is $0$ unless $r_i = k$. The entry in row $(i,k,k')$ and column $(r_1,\ldots,r_{i-1},k,r_{i+1},\ldots, r_n)$ equals
\[
X^{(i)}_{j_1\ldots k' \ldots j_n} - X^{(i)}_{r_1\ldots k \ldots r_n}. 
\]
Recall that the pure strategy $p$ always lies on $\mathcal{V}$ by definition.
\end{remark}

\begin{theorem}\label{Theorem:tangent}
    Suppose that the Spohn variety $\mathcal{V}$ of a game $X$ is smooth at the pure strategy $p$, that is, its Jacobian $\mathcal{J}(p)$ at $p$ (see Remark~\ref{remark:Jacobian}) has maximal possible rank (the codimension of $\mathcal{V}$). Then the following are equivalent
    \begin{enumerate}
        \item Every (Euclidean) neighborhood of $p$ contains a totally mixed dependency equilibrium of $X$.
        \item The kernel of $\mathcal{J}(p)$ contains a vector all of whose entries are positive real numbers.
    \end{enumerate}

In this case, $p$ is a pure dependency equilibrium of $X$.

\end{theorem}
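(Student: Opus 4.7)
The plan is to recast the problem into linear algebra by using the smoothness of $\mathcal{V}$ at $p$. Specifically, the implicit function theorem yields, for $T := \ker\mathcal{J}(p)$, a smooth local parametrization $\Phi$ from a neighbourhood of $0 \in T$ onto a neighbourhood of $p$ in $\mathcal{V}$, with $\Phi(0) = p$ and expansion $\Phi(u) = p + u + \phi(u)$ where $\phi(u) = O(\|u\|^2)$. Two structural observations would enter the proof. First, the Spohn variety is a cone (its defining ideal is generated by homogeneous polynomials), so Euler's identity gives $p \in T$, so $T$ already contains a non-trivial vector with non-negative entries. Second, the defining $(2\times 2)$-minors of the matrices $M_i$ reduce, at any totally mixed strategy, precisely to the equalities $\mathbb{E}^{(i)}_k = \mathbb{E}^{(i)}_{k'}$, so a totally mixed strategy lies on $\mathcal{V}$ if and only if it is a totally mixed DE.

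For (2)$\Rightarrow$(1) I would argue directly. Let $v \in T$ have $v_j > 0$ for every coordinate $j$. Then $\Phi(tv) = p + tv + O(t^2)$ lies on $\mathcal{V}$, and for sufficiently small $t > 0$ every entry is strictly positive: at the coordinate $j_0$ where $p$ equals $1$ the entry stays close to $1$, while at each $j \neq j_0$ the entry equals $tv_j + O(t^2) > 0$ by positivity of $v_j$. Rescaling to coordinate sum $1$ produces a totally mixed point of $\mathcal{V}$ and hence, by the second observation above, a totally mixed DE in every neighbourhood of $p$.

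The converse (1)$\Rightarrow$(2) is the delicate direction. Given a sequence $q^{(r)}$ of totally mixed DE with $q^{(r)} \to p$, write $q^{(r)} = \Phi(u^{(r)})$ and consider the normalized directions $v^{(r)} := u^{(r)}/\|u^{(r)}\|$. A second-order Taylor expansion of the defining polynomials gives $\mathcal{J}(p)(q^{(r)} - p) = O(\|q^{(r)} - p\|^2)$, hence $\mathcal{J}(p) v^{(r)} \to 0$; after passing to a subsequence, $v^{(r)} \to v$ with $\|v\|=1$ and $v \in T$, and positivity of $q^{(r)}_j$ for $j \neq j_0$ forces $v_j \geq 0$. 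The main obstacle is upgrading this non-negativity to strict positivity at every coordinate. Adding a large multiple $\lambda p \in T$ fixes the $j_0$-coordinate, so the task reduces to producing $w \in T$ with $w_j > 0$ for all $j \neq j_0$. I would establish this by contradiction using Farkas' lemma: if no such $w$ existed, a separating functional $0 \neq b \in T^\perp$ with $b_{j_0} = 0$ and $b_j \geq 0$ for $j \neq j_0$ would give
\[
0 \leq \sum_{j \neq j_0} b_j q^{(r)}_j = b\cdot q^{(r)} = b\cdot \phi(u^{(r)}) = O(\|u^{(r)}\|^2),
\]
and contrasting this second-order bound with the first-order behaviour of the coordinate sums $\sum_{j \neq j_0} q^{(r)}_j \asymp \|q^{(r)} - p\|$ should yield the contradiction. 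Making this quantitative step rigorous, possibly by invoking the explicit form of the Jacobian entries computed in Remark~\ref{remark:Jacobian}, is where I expect the main technical difficulty.

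The concluding assertion that $p$ itself is a pure DE follows immediately from (1): the approximating sequence of totally mixed DE lies in the complement of $\mathcal{W}$ and satisfies $\mathbb{E}^{(i)}_k(q^{(r)}) = \mathbb{E}^{(i)}_{k'}(q^{(r)})$ for every $r$, so passing to the limit gives the equations~(\ref{equation:limDE}) and $p$ is a DE by Definition~\ref{def:de}(1); being a pure strategy, it is a pure DE.
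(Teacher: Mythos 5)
Your proof of (2)$\Rightarrow$(1) and of the concluding assertion is correct. The paper obtains (2)$\Rightarrow$(1) in essentially the same way, though far more tersely: it identifies the tangent space with $\ker \mathcal{J}(p)+p=\ker\mathcal{J}(p)$ and passes from the tangent space to $\mathcal{V}$ without writing down the local parametrization that you make explicit. For the final claim the paper takes a slightly different route, deducing from $\mathcal{V}\cap U\cap\Delta\neq\emptyset$ that an irreducible component of $\mathcal{V}$ through $p$ is not contained in $\mathcal{W}$ and then invoking Theorem~\ref{theorem:bounds}; your direct appeal to Definition~\ref{def:de}(1) is equally valid, provided you pass to a subsequence along which the bounded common values $\mathbb{E}^{(i)}_k(q^{(r)})$ converge, so that the limits in~(\ref{equation:limDE}) actually exist.

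The genuine gap is in (1)$\Rightarrow$(2), exactly where you flagged it, and the Farkas step cannot be repaired. The separating functional $b\in T^{\perp}$ with $b_{j_0}=0$ and $b_j\geq 0$ gives $0\leq\sum_{j\neq j_0}b_jq^{(r)}_j=O(\|u^{(r)}\|^2)$, but this is perfectly compatible with $\sum_{j\neq j_0}q^{(r)}_j\asymp\|u^{(r)}\|$, because $b$ may be supported exactly on those coordinates in which $q^{(r)}$ is genuinely of second order. This is not a technicality: it occurs whenever the local branch of $\mathcal{V}$ through $p$ is tangent to a coordinate hyperplane and bends into $\Delta$, and then the implication fails outright. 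For the game with payoff matrices $\left(\begin{smallmatrix}0&1\\1&0\end{smallmatrix}\right)$ and $\left(\begin{smallmatrix}0&1\\0&-1\end{smallmatrix}\right)$ and $p=(1,0,0,0)$ (coordinates ordered $p_{11},p_{21},p_{12},p_{22}$), one has $\mathcal{J}(p)=\left(\begin{smallmatrix}0&-1&0&0\\0&0&-1&1\end{smallmatrix}\right)$ of rank $2=\mathrm{codim}\,\mathcal{V}$, so $p$ is a smooth point, and $\ker\mathcal{J}(p)=\{(w,0,z,z)\}$ contains no positive vector; yet $(1,t^2,t,t)$ lies on $\mathcal{V}$ for all $t$, is totally mixed for small $t>0$, lies outside $\mathcal{W}$, and hence by Remark~\ref{remark:outsideW} is a totally mixed DE converging to $p$. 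So statement (1) can hold while (2) fails. You should also know that the paper's own proof is no stronger at this point: it asserts that the tangent space meets $\Delta$ ``exactly when'' $\mathcal{V}\cap U\cap\Delta\neq\emptyset$ for every neighbourhood $U$ of $p$, and the ``only if'' half of that assertion is precisely the unproved step you were attacking. Closing this direction requires an additional hypothesis (for instance that the limiting direction $v$ has no vanishing coordinate, or a strict-inequality genericity condition as in~(\ref{equation:(2x2)tangent})), not a sharper linear-algebra argument.
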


\begin{proof}
    The kernel of $\mathcal{J}(p)$ shifted by $p$, that is, $\ker \mathcal{J}(p) + p$ is the tangent space of $\mathcal{V}$ at $p$. As the column $(r_1,\ldots,k',\ldots,r_n)$ of $\mathcal{J}(p)$ consists of zeros, $\ker \mathcal{J}(p)$ contains a vector with all entries positive if and only if this tangent space does. This is the case if and only if the tangent space has non-empty intersection with $\Delta$ which is true exactly when $\mathcal{V} \cap U \cap \Delta \neq \emptyset$ for every (euclidean) neighbourhood $U$ around $p$. This, in turn, is equivalent to (1) in the statement of the proposition.

    As $\mathcal{V}\cap U \cap \Delta \neq \emptyset$ for every neighbourhood $U$ of $p$, a whole irreducible component of $\mathcal{V}$ containing $p$ passes through $\Delta$ and is therefore not contained in $\mathcal{W}$. It follows by Theorem~\ref{theorem:bounds} that $p$ is a DE.
\end{proof}

 Based on Theorem~\ref{Theorem:tangent}, we will later give a very explicit form of this criterion for $(2\times 2)$-games involving only equalities and inequalities in the entries of the payoff tables, see Example~\ref{example:Jac(2x2)}. Figure~\ref{figure:prisoner} shows the Spohn variety of an example of a game where this criterion applies for the pure strategies $(1,0,0,0)$ and $(0,0,0,1)$, but it fails for the other two pure strategies, although these are also DE.

\begin{remark}\label{remark:tangent}
\begin{enumerate}
    \item Clearly, the method of the proof of Theorem~\ref{Theorem:tangent} works for arbitrary $p \in \overline \Delta$ that is a smooth point of $\mathcal{V}$: Every Euclidean neighborhood of $p$ contains a DE of $X$ if and only if $\ker \mathcal{J}(p) + p$ contains a positive vector.
    \item What is more, one could still use the same ideas to show that $p$ is a DE of $X$ by checking that $\ker \mathcal{J}(p) + p$ is not contained in $\mathcal{W}$.
    \item Even if $\ker \mathcal{J}(p) + p \subseteq \mathcal{W}$ it could still be possible to find out that $p$ is a DE of $X$ by exhibiting that an irreducible component of $\mathcal{V}$, containing $p$ as a smooth point, is not linear, for instance, by studying second order derivatives of $\mathcal{V}$ around $p$.
\end{enumerate}
\end{remark}

\section{General results for $(2 \times 2)$-games}\label{section:(2x2)}

Next, we show that at least one of the four pure strategies is always a smooth point on the Spohn variety of a game. 

\begin{proposition}\label{proposition:smooth(2x2)}

The Spohn variety of the $(2\times 2)$-game 
\[
\begin{pmatrix}
a_{11} & a_{12}\\
a_{21} & a_{22} 
\end{pmatrix} \text{ and }
\begin{pmatrix}
b_{11} & b_{12}\\
b_{21} & b_{22} 
\end{pmatrix}
\]
contains $p = (1,0,0,0)$ as a smooth point if and only if one of the following holds:
\begin{enumerate}
    \item $a_{11} \neq a_{21}$ and $b_{11} \neq b_{12}$
    \item $a_{11} = a_{21}$, $a_{11} \neq a_{22}$ and $b_{11} \neq b_{12}$
    \item $b_{11} = b_{12}$, $b_{11} \neq b_{22}$ and $a_{11} \neq a_{21}$.
\end{enumerate}

One of the four pure strategies is always a smooth point on the Spohn variety.
\end{proposition}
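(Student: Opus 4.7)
The plan for the first statement is a direct Jacobian computation: at $p = (1,0,0,0)$, differentiating the two defining quadrics of $\mathcal{V}_X$ yields
\[
\mathcal{J}(1,0,0,0) = \begin{pmatrix} 0 & 0 & a_{11}-a_{21} & a_{11}-a_{22} \\ 0 & b_{11}-b_{12} & 0 & b_{11}-b_{22} \end{pmatrix}.
\]
The codimension of the Spohn variety is generically $2$, so smoothness of $p$ amounts to this $2\times 4$ matrix having rank $2$. Since row~$1$ has nonzero entries only in columns $3,4$ and row~$2$ only in columns $2,4$, the three candidate non-trivial $2\times 2$ minors are $\pm(a_{11}-a_{21})(b_{11}-b_{12})$, $\pm(a_{11}-a_{22})(b_{11}-b_{12})$, and $\pm(a_{11}-a_{21})(b_{11}-b_{22})$. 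A four-way case analysis on the truth values of the equalities $a_{11}=a_{21}$ and $b_{11}=b_{12}$ immediately gives precisely the three disjoint alternatives (1), (2), (3).

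For the second statement, the plan is to argue by contradiction that all four pure strategies cannot be simultaneously singular. By the symmetry of relabelling the strategies of either player, the Jacobian at each pure strategy $e_{ij}$ has the same structure as above, and its rank is at most $1$ if and only if a disjunction of three conditions analogous to the failure of (1)--(3) holds: either one pair of ``column/row-difference'' equalities on the two payoff tables (like $a_{11}=a_{21}$ paired with $b_{11}=b_{12}$ at $e_{11}$), or else one of the payoff tables having three equal entries in a specific $L$-shape (like $a_{11}=a_{21}=a_{22}$ or $b_{11}=b_{12}=b_{22}$ at $e_{11}$). Imposing these disjunctions at all four pure strategies and running a finite case analysis, one sees that the only way for every Jacobian to have rank at most $1$ is if the payoff matrices are degenerate in one of three ways: (i) all $a_{ij}$ are equal, (ii) all $b_{ij}$ are equal, or (iii) $a_{11}=a_{21}$, $a_{12}=a_{22}$, $b_{11}=b_{12}$, $b_{21}=b_{22}$ (columns of $(a_{ij})$ and rows of $(b_{ij})$ constant).

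In case (iii) a direct computation shows that $f_1$ and $f_2$ are nonzero scalar multiples of the Segre relation $p_{11}p_{22}-p_{12}p_{21}$, so $\mathcal{V}_X$ is the Segre quadric, and the gradient of this single defining equation is nonzero at each pure strategy, so each of them is smooth on this codimension-$1$ variety. In cases (i) and (ii), $\mathcal{V}_X$ is again a codimension-$1$ quadric given by the surviving one of $f_1, f_2$, and the same ``not all four gradients at pure strategies can vanish'' argument applied to the surviving gradient yields at least one pure strategy that is smooth on $\mathcal{V}_X$. The main obstacle is the combinatorial case analysis funnelling the general assumption into the three degenerate situations; my approach is to keep it tractable by systematically tracking which of the three disjuncts in each of the four singularity conditions is operative, together with the symmetry that makes one representative computation suffice for the others.
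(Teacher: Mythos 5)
Your approach is the same as the paper's: compute the Jacobian of the two defining quadrics at $p=(1,0,0,0)$ (your matrix agrees with the paper's up to the ordering of the columns), read off smoothness from the rank, and then treat the degenerate configurations separately for the second statement. Your rank analysis is correct: the three nontrivial $2\times 2$ minors are exactly the products you list, and the case split on $a_{11}=a_{21}$ and $b_{11}=b_{12}$ reproduces alternatives (1)--(3). For the second statement you are in fact more explicit, and more complete, than the paper: the paper only excepts the configuration of Theorem~\ref{theorem:dimension(2x2)}(3)(a), whereas a full case analysis (which you carry out) shows that the rank-$\le 1$ condition can hold at all four pure strategies also when one of the payoff tables is constant --- your families (i) and (ii) --- and in those cases one must, as you do, fall back on the gradient of the single surviving quadric (or observe $\mathcal V=\mathbb P^3$ if both tables are constant). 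One caveat, which your proof shares with the paper's: the equivalence ``$p$ smooth $\Leftrightarrow$ $\operatorname{rank}\mathcal J(p)=2$'' presupposes that $\mathcal V$ has codimension $2$ at $p$; you flag this with the word ``generically,'' but the proposition is stated for arbitrary games, and in the degenerate families the ``only if'' direction of the stated characterization is delicate (e.g.\ in case (3)(a) the Spohn variety is the smooth Segre quadric, so $p$ is smooth even though none of (1)--(3) holds). Since the paper's own proof silently makes the same identification, this is not a defect of your argument relative to the paper, but it is worth stating explicitly that the rank criterion is being used as the operative definition of smoothness at $p$.
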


\begin{proof}

The Jacobian of the Spohn variety at $p$ is
\[ \mathcal{J}(p) = 
\begin{pmatrix}
0 & a_{11} - a_{21} & 0 & a_{11} - a_{22}\\
0 &  0 & b_{11}-b_{12} & b_{11}-b_{22}
\end{pmatrix}.
\]

By considering analogous characterizations for the other pure strategies, it can be easily checked that at least one of the four pure strategies is a smooth point, unless the pay-off tables are of the form shown in Theorem~\ref{theorem:dimension(2x2)}(3)(a) in which case the Spohn variety is smooth anyway.
\end{proof}

$(2\times 2)$-games split up into distinct classes concerning the geometric structure of their sets of DE. An understanding of these classes is based on the considerations of Theorem~\ref{theorem:dimension(2x2)} below and its proof which give, from this viewpoint, a full and sensible characterization of $(2\times 2)$-games. Cases (3)(c) and (3)(d) deserve further investigation concerning the number of irreducible components of the Spohn variety (which all are curves in these cases) and their individual degrees. This is studied in \cite{kidambi2025elliptic}.

\begin{theorem}\label{theorem:dimension(2x2)}
There are the following cases for the Spohn variety $\mathcal{V}$ of the $(2\times 2)$-game 
\[
\begin{pmatrix}
a_{11} & a_{12}\\
a_{21} & a_{22} 
\end{pmatrix} \text{ and }
\begin{pmatrix}
b_{11} & b_{12}\\
b_{21} & b_{22} 
\end{pmatrix}.
\]

\begin{enumerate}
\item Both payoff tables are constant in which case $\mathcal{V} = \mathbb{P}^3$.
\item Exactly one of the payoff tables is constant.
\begin{itemize}
\item[(a)] $\mathcal{V}$ is an irreducible surface of degree $2$ and so is its real part.
\item[(b)] $\mathcal{V}$ is the union of two distinct planes and so is its real part.
\end{itemize}
\item None of the payoff tables is constant.
\begin{itemize}
\item[(a)] The payoff tables are of the form
\[
\begin{pmatrix}
a & c\\
a & c 
\end{pmatrix} \text{ and }
\begin{pmatrix}
b & b\\
d & d 
\end{pmatrix}
\]
with $a\neq c$ and $b \neq d$. Then $\mathcal{V}$ is an irreducible surface of degree $2$, the Segre variety in $\mathbb{P}^3$, and so is its real part.
\item[(b)] $\mathcal{V}$ is the intersection of two varieties which are both the union of two distinct planes.
\begin{itemize}
\item $\mathcal{V}$ is the union of a plane and a line not contained in the plane, and so is its real part.
\item $\mathcal{V}$ is the union of two lines with empty intersection, and so is its real part.
\end{itemize}
\item[(c)] $\mathcal{V}$ is the intersection of a degree $2$ irreducible surface and a variety which is the union of two distinct planes and, hence, $\mathcal{V}$ is the union of two distinct curves.
\item[(d)] $\mathcal{V}$ is the intersection of two distinct degree $2$ surfaces and, hence, it is the union of curves. 
\end{itemize}
\end{enumerate}
This last case (3)(d) happens if and only if none of the following assertions holds:
\begin{center}
(i) $a_{11} = a_{21}$ and $a_{12} = a_{22}$ and $b_{11} = b_{12}$ and $b_{21} = b_{22}$,
\begin{tabular}{c c}
(ii) $a_{11} = a_{21}$ and $a_{11} = a_{22}$, & (iii) $a_{12} = a_{21}$ and $a_{12} = a_{22}$, \\
(iv) $a_{11} = a_{22}$ and $a_{12} = a_{22}$, & (v) $a_{11} = a_{21}$ and $a_{12} = a_{21}$, \\
(vi) $b_{11} = b_{12}$ and $b_{11} = b_{22}$, & (vii) $b_{21} = b_{12}$ and $b_{21} = b_{22}$, \\
(viii) $b_{11} = b_{22}$ and $b_{21} = b_{22}$, & (ix) $b_{11} = b_{12}$ and $b_{21} = b_{12}$.
\end{tabular}
\end{center}
\end{theorem}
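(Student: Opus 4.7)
The plan is to reduce the classification to an analysis of the two defining polynomials
\begin{align*}
f_1 &= \det M_1 = (a_{11}-a_{21})p_{11}p_{21} + (a_{11}-a_{22})p_{11}p_{22} + (a_{12}-a_{21})p_{12}p_{21} + (a_{12}-a_{22})p_{12}p_{22},\\
f_2 &= \det M_2 = (b_{11}-b_{12})p_{11}p_{12} + (b_{11}-b_{22})p_{11}p_{22} + (b_{21}-b_{12})p_{12}p_{21} + (b_{21}-b_{22})p_{21}p_{22}.
\end{align*}
Each is bihomogeneous bilinear: $f_1 = (p_{11}, p_{12})\,C_A\,(p_{21},p_{22})^T$ with
\[
C_A = \begin{pmatrix} a_{11}-a_{21} & a_{11}-a_{22} \\ a_{12}-a_{21} & a_{12}-a_{22} \end{pmatrix}, \qquad \det C_A = (a_{11}-a_{12})(a_{21}-a_{22}),
\]
and analogously $f_2 = (p_{11}, p_{21})\,C_B\,(p_{12}, p_{22})^T$ with $\det C_B = (b_{11}-b_{21})(b_{12}-b_{22})$. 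The standard dichotomy for bilinear forms then gives three mutually exclusive possibilities for each $f_i$: $f_i \equiv 0$ iff the coefficient matrix is zero, $f_i$ factors as a product of two linear forms iff the matrix has rank $1$, and $f_i$ is irreducible iff the matrix has rank $2$.

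Cases (1) and (2) fall out immediately from which of $C_A, C_B$ vanishes. For (3)(a), I would substitute the given payoff structure and compute $f_1 = (a-c)(p_{11}p_{22}-p_{12}p_{21})$ and $f_2 = (b-d)(p_{11}p_{22}-p_{12}p_{21})$, so $\mathcal{V}$ is the Segre quadric. Cases (3)(b), (3)(c), (3)(d) then follow from the three possibilities for the pair $(\mathrm{rank}\,C_A, \mathrm{rank}\,C_B)$ when both are nonzero: both equal to $1$ gives (3)(b); exactly one equal to $1$ gives (3)(c); both equal to $2$ with $f_1, f_2$ non-proportional gives (3)(d). For (3)(b), the finer subdivision into plane-plus-line versus two-disjoint-lines follows by checking whether any linear factor of $f_1$ coincides with one of $f_2$; for (3)(c), the intersection of a reducible quadric with an irreducible one decomposes into two plane conics by B\'ezout; for (3)(d), B\'ezout gives a complete intersection of degree $4$.

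For the final equivalence, I would systematically match each assertion in (i)--(ix) to a preceding subcase. Condition (i) reproduces exactly the payoff structure of (3)(a), making $f_1, f_2$ both scalar multiples of the Segre quadric. Each of (ii)--(v) forces three entries of $A$ to coincide, and a direct substitution shows that $f_1$ then factors as a constant times $p_{ij}(p_{k1}+p_{k2})$ for suitable indices, placing $X$ into case (3)(b) or (3)(c); conditions (vi)--(ix) degenerate $f_2$ in an analogous way. Conversely, if none of the nine conditions holds, I would verify that both $f_1, f_2$ are irreducible, distinct, and not proportional, so (3)(d) applies.

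The main obstacle I anticipate is the converse direction of the final equivalence: I must confirm that the nine enumerated assertions truly exhaust the degeneracies placing $X$ outside (3)(d), and matching them to (3)(b) and (3)(c) requires careful bookkeeping between patterns of equalities among the entries and the resulting structure of the reducible quadric factor. A secondary subtlety is the plane-versus-skew-lines dichotomy in (3)(b), which depends on the combinatorics of coincidences among the linear factors of $f_1$ and $f_2$ and seems to require a separate finite case check distinguishing when two of the four resulting lines collapse.
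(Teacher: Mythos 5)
Your reduction to the coefficient matrices $C_A$, $C_B$ of the two bilinear forms is sound, and up to the case split it is essentially the paper's own argument in cleaner form: the paper groups the terms of $f_a$ and $f_b$ by hand to exhibit the same factorizations, and argues irreducibility by viewing $f_a$ as a linear polynomial in $p_{11}$ and checking primitivity, whereas your rank trichotomy (zero / rank $1$ / rank $2$) plus the observation that a rank-$1$ bilinear form in separated variable blocks splits into two planes does the same work more transparently. Cases (1), (2) and (3)(a) go through as you describe, and the proportionality question for two rank-$2$ forms is settled by comparing monomial supports ($f_1$ is supported on $p_{1i}p_{2j}$, $f_2$ on $p_{i1}p_{j2}$, so proportionality forces both supports into $\{p_{11}p_{22},\,p_{12}p_{21}\}$, which is exactly condition (i)).

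The obstacle you flag at the end is, however, a genuine gap, and it cannot be closed as planned. Your own formula $\det C_A=(a_{11}-a_{12})(a_{21}-a_{22})$ says that $f_1$ is reducible precisely when a row of the first payoff table is constant, while each of (ii)--(v) demands that \emph{three} entries of that table coincide --- a strictly stronger condition. Concretely, take $a_{11}=a_{12}=0$, $a_{21}=1$, $a_{22}=2$ and a generic second table: then $M_1$ has zero top-right entry, so $f_1=(p_{11}+p_{12})(p_{21}+2p_{22})$ splits and the game lands in case (3)(c), yet none of (i)--(ix) holds. Hence the implication ``none of (i)--(ix) $\Rightarrow$ (3)(d)'' that you intend to verify is false, and no amount of bookkeeping will establish it. (The paper's proof of this step asserts that the negations of (iv) and (v) make the leading coefficient and the constant term of $f_a$, viewed as a linear polynomial in $p_{11}$, coprime; on the example above both are divisible by $p_{21}+2p_{22}$, so that assertion fails in the same way. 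The correct reducibility criterion is $a_{11}=a_{12}$ or $a_{21}=a_{22}$ for $f_1$, and $b_{11}=b_{21}$ or $b_{12}=b_{22}$ for $f_2$.) A second, smaller instance of the same phenomenon occurs in your ``separate finite case check'' for (3)(b): when both forms have rank $1$ and no linear factor of $f_1$ is proportional to one of $f_2$, the intersection is a union of \emph{four} pairwise distinct lines (e.g.\ $f_1=(p_{11}+p_{12})(p_{21}+2p_{22})$, $f_2=(p_{11}+p_{21})(p_{12}+2p_{22})$), a configuration not covered by the two listed sub-cases. So your method is the right one, but carrying it out requires amending the statement rather than confirming it.
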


\begin{proof}
After easy computations the two defining polynomials of the Spohn variety are represented as
\[
f_a = p_{11}(p_{21}(a_{11}-a_{21}) + p_{22}(a_{11} - a_{22})) + p_{12}(p_{21}(a_{12} - a_{21}) + p_{22}(a_{12} - a_{22}))
\]
and
\[
f_b = p_{11}(p_{12}(b_{11}-b_{12}) + p_{22}(b_{11}-b_{22})) + p_{21}(p_{12}(b_{21}-b_{12}) + p_{22}(b_{21}-b_{22})).
\]
Only when (i) (or, equivalently, (3)(a)) is satisfied can these two polynomials agree (up to a constant multiplicative factor). So this is the only case in which both polynomials define the same variety.

The statements (ii)-(v) exactly characterize the cases when $f_a$ is not irreducible. Indeed, in all other cases, $f_a$ is a primitive linear polynomial considered as polynomial in one of the four variables over the polynomial ring in the other three variables. For instance, the negations of (ii) and (iii) imply that $f_a$ is a non-constant linear polynomial in $p_{11}$ with non-zero constant term, and the negations of (iv) and (v) imply that the coefficients (in $\C[p_{21},p_{12},p_{22}]$) of this linear polynomial are coprime.
Analogously, (vi)-(ix) exactly characterize the cases when $f_b$ is not irreducible.
These observations give the characterization of case (3)(d).

A careful but easy analysis of the statements (i)-(ix) and of the irreducible factors of the representations of $f_a$ and $f_b$ above yields the other cases of the theorem.
\end{proof}

\begin{problem}
    Investigate further the cases (3)(c) and (3)(d) of Theorem~\ref{theorem:dimension(2x2)}. Characterize the number and the geometric structure of the irreducible components of the Spohn variety by algebraic relations on the payoffs, in these cases.
\end{problem}

We will now present a class of $(2 \times 2)$-games that will show us the following:
\begin{enumerate}
\item Not every DE is an aDE (not even for $(2 \times 2)$-games).
\item The converse of Lemma~\ref{lemma:DEinV(2x2)} is not true, that is, there might exist points of $\overline{\Delta} \cap \mathcal{V}$ that are not DE.
\end{enumerate}

\begin{example}\label{example:special}
Consider, for arbitrary $a_1,a_2,b_1,b_2,c \in \R$, the game $X$ with payoff tables
\[
\begin{pmatrix}
a_{1} & a_{1}\\
2 a_{1} & a_{2} 
\end{pmatrix} \text{ and }
\begin{pmatrix}
b_{1} & c\\
c & b_{2} 
\end{pmatrix}.
\]
The Spohn variety of this game has at least two irreducible components and, when the above numbers are chosen generically then the irreducible components are given by the two minimal prime ideals of the ideal of $\mathcal{V}$:
\begin{align*}
P &= (p_{11} + p_{12}, \ b_1 p_{12}^2 + b_1 p_{12} p_{22} + b_2 p_{21} p_{22} - b_2 p_{12} p_{22} - c p_{12}^2 - c p_{21} p_{22}) \\
Q &= (a_1 p_{21} - a_1 p_{22} + a_2 p_{22}, \ b_1 p_{11} p_{12} + b_1 p_{11} p_{22} - b_2 p_{11} p_{22} - b_2 p_{21} p_{22} - c p_{11} p_{12} + c p_{21} p_{22})
\end{align*}
Note that the irreducible components of $\mathcal{V}$ whose union is $V(P)$ are contained in $W$ and hence $\overline{\mathcal{V} \setminus \mathcal{W}} = V(Q)$. Here, $V(P)$ denotes the variety defined by $P$, that is, the set of all common zeros of polynomials in $P$.
\end{example}

\begin{proposition}
If $a_1 \neq 0$ then the pure strategy
\[
\begin{pmatrix}
0 & 0 \\
1 & 0 
\end{pmatrix}
\]
is a DE but not an aDE of the game $X$ of Example~\ref{example:special}.
\end{proposition}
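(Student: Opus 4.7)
My approach is to treat the two assertions of the proposition separately: that the pure strategy $p$ (with $p_{21}=1$ and all other entries zero) is a DE, and that $p$ is not an aDE.

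For the claim that $p$ is not an aDE, I plan to use the explicit decomposition of the Spohn variety given in Example~\ref{example:special}. There, the component $\overline{\mathcal{V}\setminus\mathcal{W}}$ is exhibited as $V(Q)$ for generic choices of the parameters, and aDE are characterized (via Definition~\ref{def:de}(2)) as the elements of $\overline\Delta \cap V(Q)$. Substituting $p$ into the first generator of $Q$, namely $a_1 p_{21}-a_1 p_{22}+a_2 p_{22}$, yields the value $a_1$, which is non-zero by hypothesis. Hence $p \notin V(Q)$, so $p$ fails to be an aDE. This part is immediate once the generators of $Q$ are in hand.

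For the claim that $p$ is a DE, the plan is to construct an explicit approximating sequence $(p^{(r)}) \subset \C^{2\times 2}\setminus\mathcal{W}$ with $p^{(r)} \to p$ such that all four pairwise limit equations in~(\ref{equation:limDE}) are simultaneously satisfied. A natural ansatz is to send the three small entries $p^{(r)}_{11}, p^{(r)}_{12}, p^{(r)}_{22}$ to zero at distinct, tunable rates (with possibly complex prefactors, as permitted by Definition~\ref{def:de}(1)), while $p^{(r)}_{21}$ absorbs the remaining mass near $1$. For each choice of rates, the four conditional expected payoffs reduce in the limit to rational expressions in the ratios of the dominating terms; matching these across rows for each player turns the DE condition into a small algebraic system in the rate parameters, which I would solve directly.

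The main obstacle is producing the sequence in a way that makes all four limit equations compatible with the specific payoff tables of Example~\ref{example:special}. Because $a_{11}=a_{12}=a_1$, one has $\mathbb E^{(1)}_1(p^{(r)})\equiv a_1$ along any sequence outside $\mathcal W$, so the entire system is anchored at the value $a_1$ for player~$1$; the rate parameters must then be chosen so that the $p^{(r)}_{22}/p^{(r)}_{21}$ behavior drives $\mathbb E^{(1)}_2(p^{(r)})$ to $a_1$ as well, and analogously for player~$2$. Once the ratios are tuned and verified (a routine computation once the ansatz is fixed), the conjunction of Parts~1 and~2 establishes the proposition.
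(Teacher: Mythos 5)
Your first part (that $p$ is not an aDE) is exactly the paper's argument: evaluate the first generator $a_1p_{21}-a_1p_{22}+a_2p_{22}$ of $Q$ at $p$ to get the value $a_1\neq 0$, so $p\notin V(Q)=\zar{\mathcal{V}\setminus\mathcal{W}}$ and $p$ is not an aDE. No issues there.

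The gap is in the second part, and it is not one that the deferred ``routine computation'' will close. For the pure strategy in question $p_{21}=1$, so along \emph{any} sequence $(p^{(r)})$ outside $\mathcal{W}$ converging to $p$ the denominator $p^{(r)}_{21}+p^{(r)}_{22}$ tends to $1$; hence $\mathbb{E}^{(1)}_2(p^{(r)})=\frac{2a_1p^{(r)}_{21}+a_2p^{(r)}_{22}}{p^{(r)}_{21}+p^{(r)}_{22}}$ is continuous at $p$ and its limit is forced to equal $2a_1$. There is no freedom left in your rate parameters: the ratio $p^{(r)}_{22}/p^{(r)}_{21}$ necessarily tends to $0$ because $p^{(r)}_{22}\to 0$ while $p^{(r)}_{21}\to 1$, and complex prefactors only buy you something for entries that tend to $0$, not for the entry pinned at $1$. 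Since, as you correctly note, $\mathbb{E}^{(1)}_1(p^{(r)})\equiv a_1$, the first equation of~(\ref{equation:limDE}) forces $a_1=2a_1$, i.e.\ $a_1=0$ --- the small algebraic system you intend to solve is inconsistent precisely under the hypothesis $a_1\neq 0$. For comparison, the paper completes this step by exhibiting the explicit sequence $p^{(r)}_{11}=p^{(r)}_{12}=\frac{1}{2r}$, $p^{(r)}_{21}=1-\frac{1}{r}$, $p^{(r)}_{22}=0$; but the same computation applies to it ($\mathbb{E}^{(1)}_1=a_1$ and $\mathbb{E}^{(1)}_2=2a_1$ along that sequence), so the obstruction is intrinsic to this pure strategy and this game rather than an artifact of your ansatz. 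You should surface this explicitly instead of deferring it: as stated, the ``is a DE'' half cannot be proved by any approximating sequence, which indicates that either the intended pure strategy or the payoff table in the statement must be different.
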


\begin{proof}
As seen in Example~\ref{example:special}, $\overline{\Delta}\cap \overline{\mathcal V \setminus \mathcal{W}} = \overline{\Delta} \cap V(Q)$ for such a particular game $X$. But the pure strategy $p$ from the statement of the proposition is not on $V(Q)$ unless $a_1 = 0$. Therefore, it is not an aDE of $X$.

On the other hand, the sequence
\[
\begin{pmatrix}
\frac{1}{2r} & \frac{1}{2r} \\
1 - \frac{1}{r} & 0 
\end{pmatrix},
\]
with $r \in \Z_{>0}$, has no element in $\mathcal{W}$, converges to $p$ and satisfies the equations~(\ref{equation:limDE}). Hence, $p$ is a DE of $X$.
\end{proof}

\begin{proposition}\label{proposition:SpohnnotDE}
If $a_2 \neq 0$ and $c = b_2$ then the strategy
\[
\begin{pmatrix}
0 & 0 \\
1/2 & 1/2
\end{pmatrix}
\]
lies on the Spohn variety of the game $X$ of Example~\ref{example:special} but is not a DE of $X$.
\end{proposition}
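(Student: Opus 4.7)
The plan is to first verify directly that $p=(0,0,1/2,1/2)$ sits on $\mathcal{V}_X$ and then to rule out the existence of an approximating sequence by using the very first of the defining limit equations.

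First I would substitute $p_{11}=p_{12}=0$, $p_{21}=p_{22}=1/2$ into the two Spohn determinants $f_a$ and $f_b$ written out in Example~\ref{example:one-generic-one-reducible}. The polynomial $f_a$ vanishes automatically because $p_{11}+p_{12}=0$ kills both summands. For $f_b$, with the payoffs $b_{11}=b_1,\ b_{12}=c,\ b_{21}=c,\ b_{22}=b_2$, one gets $f_b(p)=(c-b_2)/4$, which vanishes precisely under the hypothesis $c=b_2$. So $p\in\mathcal{V}_X$.

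For the non-existence of an approximating sequence, I would argue by contradiction. Suppose there is a sequence $(p^{(r)})$ in $\C^{2\times 2}\setminus\mathcal{W}$ tending to $p$ and satisfying the two limit equations displayed in the excerpt. Since each $p^{(r)}_{11}+p^{(r)}_{12}\neq 0$, the first row-player quotient
\[
\frac{a_{11}p^{(r)}_{11}+a_{12}p^{(r)}_{12}}{p^{(r)}_{11}+p^{(r)}_{12}} \;=\; \frac{a_1(p^{(r)}_{11}+p^{(r)}_{12})}{p^{(r)}_{11}+p^{(r)}_{12}} \;=\; a_1
\]
is identically equal to $a_1$ because $a_{11}=a_{12}=a_1$. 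On the other hand, $p^{(r)}_{21}+p^{(r)}_{22}\to 1\neq 0$, so continuity of division yields
\[
\lim_{r\to\infty}\frac{a_{21}p^{(r)}_{21}+a_{22}p^{(r)}_{22}}{p^{(r)}_{21}+p^{(r)}_{22}} \;=\; \frac{2a_1\cdot\tfrac12 + a_2\cdot\tfrac12}{1} \;=\; a_1+\tfrac{a_2}{2}.
\]
Equating the two sides forces $a_2=0$, contradicting the hypothesis $a_2\neq 0$.

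There is no real obstacle here: the only subtlety is to remember that $\mathcal{W}$ only excludes the vanishing of the four marginal sums, which is enough to justify cancellation in the first quotient and continuity of division in the second. The key structural observation driving the argument is that the first row of the payoff table $A$ is constant, which makes the first conditional expectation identically $a_1$ along every admissible sequence and pins down the limit uniquely, independent of how $p^{(r)}_{11}$ and $p^{(r)}_{12}$ approach zero.
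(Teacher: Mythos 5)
Your proposal is correct and follows essentially the same route as the paper: the membership in $\mathcal{V}_X$ is a direct substitution (the paper phrases it via the component $V(P)$, you via the two Spohn determinants, which amounts to the same computation), and the contradiction $a_1 = a_1 + \tfrac{1}{2}a_2$ from the first limit equation is exactly the paper's argument.
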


\begin{proof}
It is an easy computation that, under the assumption $c = b_2$, the strategy $p$ of the statement of the proposition lies on $V(P)$ which is a subset of the Spohn variety. 

Assume to the contrary that $p$ is a DE of $X$. In particular, it follows from the equations~(\ref{equation:limDE}) that 
\[
a_1 = \lim_{r \to \infty} \frac{a_{1}p_{11}^{(r)} + a_{1}p_{12}^{(r)}}{p_{11}^{(r)} + p_{12}^{(r)}} =  \lim_{r \to \infty} \frac{2 a_{1}p_{21}^{(r)} + a_{2}p_{22}^{(r)}}{p_{21}^{(r)} + p_{22}^{(r)}} = a_1 + \frac{1}{2} a_2
\]
which is a contradiction as $a_2 \neq 0$. 
\end{proof}

Next, we give an example of a Nash equilibrium that is not a DE.

\begin{example}\label{example:NEnotDE}
We go to the situation of Proposition~\ref{proposition:SpohnnotDE}, that is, we consider the $(2 \times 2)$-game with payoff tables
\[
\begin{pmatrix}
a_{1} & a_{1}\\
2 a_{1} & a_{2} 
\end{pmatrix} \text{ and }
\begin{pmatrix}
b_{1} & b_2\\
b_2 & b_{2}
\end{pmatrix},
\]
where $a_2 \neq 0$, and the strategy
\[
p = \begin{pmatrix}
0 & 0 \\
1/2 & 1/2
\end{pmatrix}
= (0,1) \otimes \left(\frac{1}{2}, \frac{1}{2}\right)\in \Delta_1 \times \Delta_1.
\]
We assume in addition that $a_2 > 0$.
We already know that $p$ is not a DE of the game but it lies on its Spohn variety. 
We use the notation of~\cite[Theorem 6.4]{solving}, that is, $p_1^{(1)} = 0$, $p_2^{(1)} = 1$, and $p_1^{(2)} = p_2^{(2)} = \frac{1}{2}$. For $p_k^{(i)} \neq 0$, it is a simple computation that the corresponding equations from~\cite[Theorem 6.6]{solving} are satisfied. 

As $p_1^{(1)} = 0$, we have to check that the parenthesis expression on the right of the equations in~\cite[Theorem 6.4]{solving} is non-negative. This expression is
\[
\frac{1}{2} (2a_1 + a_2) - (\frac{1}{2} a_1 + \frac{1}{2} a_1) = \frac{1}{2} a_2
\]
which is positive by assumption. So $p$ is a Nash equilibrium.
\end{example}

In the following example, we work out and apply Theorem~\ref{Theorem:tangent} in the case of $(2\times 2)$-games.

\begin{example}\label{example:Jac(2x2)}

Let us go back to the set-up of Proposition~\ref{proposition:smooth(2x2)} and set $p = (1,0,0,0)$. In the situations (2) and (3), a vector $(w,x,y,z)$ in $\ker \mathcal{J}(p)$ must satisfy $z = 0$ and hence, by Theorem~\ref{Theorem:tangent}, there is a neighbourhood of $p$ not containing a totally mixed DE of the game.

In the situation (1), that is, $a_{11} \neq a_{21}$ and $b_{11} \neq b_{12}$, the real vectors in $\ker \mathcal{J}(p)$ are exactly of the form
\[
\left(w,-\frac{a_{11} - a_{22}}{a_{11} - a_{21}} z, -\frac{b_{11}-b_{22}}{b_{11}-b_{12}} z, z\right)
\]
with $w,z$ real numbers. Such a vector can have only positive entries for some choices of $w$ and $z$ if and only if
\begin{equation}\label{equation:(2x2)tangent}
    (a_{11}-a_{22})(a_{11}-a_{21}) < 0 
\text{ and }
(b_{11}-b_{22})(b_{11}-b_{12}) < 0.
\end{equation}
So, these are exactly the cases in which every neighbourhood around $p$ contains totally mixed DE of the game by Theorem~\ref{Theorem:tangent}. In particular, if (\ref{equation:(2x2)tangent}) is satisfied then $p = (1,0,0,0)$ is a pure DE of the game.

Note that the analogous statements are clearly true for all other three pure strategies by exchanging the corresponding indices.   
\end{example}

We saw in Proposition~\ref{proposition:constructible} that the set of all games $X$ of a fixed size such that $\zar{ \mathcal{V}_X \setminus \mathcal{W}} = \mathcal{V}$ (which, by Theorem~\ref{theorem:bounds}, implies that $\mathcal{V} \cap \overline{\Delta}$ equals the set of DE of $X$) is the complement of an affine variety in the ambient affine space.
In the following, we determine this set explicitly by its defining non-equations for $(2\times 2)$-games. As we will see, it is the complement of a finite union of linear subspaces of $\mathbb{C}^8$ and the restriction that it poses is also sensible in game-theoretic terms: It just asks for both players $i \in \{1,2\}$ that, if $i$ decides to choose a pure strategy, then $i$'s payoff is still dependent on the choice of pure strategy of the other player. This equals the usual notion of ``generic $(2 \times 2)$-game'' within the game theory community and  is made precise in (2) of the following theorem.

As usual, for an ideal $I$ in a polynomial ring (or a set of polynomials), we will denote by $V(I)$ the set of all points that are zeros of all polynomials in $I$ and by $D(I)$ its complement.

\begin{theorem}\label{theorem:2x2}
For a $(2\times 2)$-game $X$ with payoff tables
\[
\begin{pmatrix}
a_{11} & a_{12}\\
a_{21} & a_{22} 
\end{pmatrix} \text{ and }
\begin{pmatrix}
b_{11} & b_{12}\\
b_{21} & b_{22} 
\end{pmatrix}
\]
and joint mixed strategies denoted by $p = (p_{11},p_{21},p_{12},p_{22})$, the following are equivalent:
\begin{enumerate}
\item $\mathcal{V} = \zar{\mathcal{V} \setminus \mathcal{W}}$ where $\mathcal{V}$ is the Spohn variety of $X$ and $\mathcal{W}$ is the union of planes defined by $ (p_{ij}+p_{kl}) = 0$ for $(i,j) \neq (k,l)$ with $i = k$ or $j = l$.
\item $a_{ii} \neq a_{ij}$, $a_{ii} \neq a_{ji}$, $b_{ii} \neq b_{ij}$, and $b_{ii} \neq b_{ji}$ for all $i,j \in \{1,2\}$ with $i \neq j$.
\end{enumerate}
In particular, if (2) holds then the set of DE of $X$ equals the set of non-negative real points of $\mathcal{V}$ and every Nash equilibrium is a dependency equilibrium of $X$.
\end{theorem}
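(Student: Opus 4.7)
The reduction is clean: by Remark~\ref{remark:constructible}, condition (1) holds if and only if no irreducible component of $\mathcal{V}$ lies in $\mathcal{W}$. My plan is to show that this happens exactly when the eight inequalities in (2) all hold, by exhibiting a component of $\mathcal{V}$ inside $\mathcal{W}$ whenever an equality is violated, and conversely ruling out any such component under (2).

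For the direction $\neg(2) \Rightarrow \neg(1)$, I would split the eight equalities into two types. The first four ($a_{11}=a_{12}$, $a_{21}=a_{22}$, $b_{11}=b_{21}$, $b_{12}=b_{22}$) each cause $f_a$ or $f_b$ to factor with a $\mathcal{W}$-defining linear form: for instance, $a_{11}=a_{12}$ gives $f_a = (p_{11}+p_{12})((a_{11}-a_{21})p_{21}+(a_{11}-a_{22})p_{22})$, so $V(p_{11}+p_{12},f_b) \subseteq \mathcal{V}$ is a curve lying in $V(p_{11}+p_{12}) \subseteq \mathcal{W}$, and its irreducible components are components of $\mathcal{V}$. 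The remaining four equalities ($a_{11}=a_{21}$, $a_{12}=a_{22}$, $b_{11}=b_{12}$, $b_{21}=b_{22}$) each force a specific coordinate line to lie in $\mathcal{V}$: when $a_{11}=a_{21}$, direct substitution shows that both $f_a$ and $f_b$ vanish on the line $V(p_{12},p_{22})$, which is contained in $V(p_{12}+p_{22}) \subseteq \mathcal{W}$. In each of the eight cases the subvariety produced is, or contains, an irreducible component of $\mathcal{V}$, as otherwise it would have to lie in a $2$-dimensional component of $\mathcal{V}$, which would force $f_a$ and $f_b$ to share a common linear factor---a situation that requires further equalities beyond the single assumed violation, and in any such event we still obtain a (possibly different) component in $\mathcal{W}$.

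For $(2) \Rightarrow (1)$, I show that none of the four hyperplanes making up $\mathcal{W}$ contains a component of $\mathcal{V}$. Taking $H = V(p_{11}+p_{12})$ as a representative case, I substitute $p_{11}=-p_{12}$ to obtain $f_a|_H = (a_{12}-a_{11})p_{12}(p_{21}+p_{22})$, which is nonzero since $a_{11}\neq a_{12}$; hence $V(f_a)\cap H$ decomposes into the two lines $V(p_{11},p_{12})$ and $V(p_{11}+p_{12},p_{21}+p_{22})$. One checks that $f_b|_{V(p_{11},p_{12})} = (b_{21}-b_{22})p_{21}p_{22}$, nonzero by $b_{21}\neq b_{22}$, and that $f_b$ restricted to the second line is a nonzero binary quadratic form using $b_{11}\neq b_{12}$ and $b_{21}\neq b_{22}$. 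So neither line lies in $V(f_b)$, making $\mathcal{V}\cap H$ zero-dimensional; since also $H\not\subseteq \mathcal{V}$ (as $(p_{11}+p_{12})\nmid f_a$), $H$ contains no component of $\mathcal{V}$. Symmetric arguments using the remaining six inequalities handle the other three hyperplanes of $\mathcal{W}$.

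The ``In particular'' clause is then immediate from Theorem~\ref{theorem:bounds}, which gives $\text{DE}(X) = \overline{\Delta}\cap\mathcal{V}$ once (1) is established, and from Corollary~\ref{corollary:NEisDEgeneric}, giving that every Nash equilibrium is a DE. The main obstacle in carrying out this plan is the careful bookkeeping in the forward direction: confirming for each violated equality that the exhibited subvariety really produces an irreducible component of $\mathcal{V}$, which reduces to a discussion of which further equalities among the $a_{ij}$ or $b_{ij}$ would be needed to spawn a $2$-dimensional plane component---none of which are forced by a single violated equality in (2).
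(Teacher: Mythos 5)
Your overall strategy coincides with the paper's: reduce via Remark~\ref{remark:constructible} to the statement that no irreducible component of $\mathcal{V}$ lies in $\mathcal{W}$, and then analyze the two quadrics $f_a,f_b$ restricted to the four planes of $\mathcal{W}$. Your direction $(2)\Rightarrow(1)$ is correct and essentially reproduces the paper's computation: the factorizations $f_a|_{\{p_{11}=-p_{12}\}}=(a_{12}-a_{11})p_{12}(p_{21}+p_{22})$ etc.\ check out, and all eight inequalities of (2) are used across the four planes.

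The genuine gap is in $\neg(2)\Rightarrow\neg(1)$. Exhibiting a curve or line inside $\mathcal{V}\cap\mathcal{W}$ is not enough; you must show it is, or sits inside, an \emph{irreducible component of $\mathcal{V}$ contained in $\mathcal{W}$}. You defer exactly this point to the claim that any failure forces $f_a$ and $f_b$ to share a linear factor and that ``in any such event we still obtain a (possibly different) component in $\mathcal{W}$.'' That sentence is where all the remaining content lives, and it is not proved --- nor can it be proved as stated. Concretely: if player $2$'s payoff table is constant (so $b_{11}=b_{12}$ and (2) fails) while $f_a$ is an irreducible quadric of rank $4$, then $f_b\equiv 0$ and $\mathcal{V}=V(f_a)$ is an irreducible surface; the line $V(p_{21},p_{22})$ you exhibit lies on $\mathcal{V}$ but is not a component of it, and no component of $\mathcal{V}$ lies in any plane of $\mathcal{W}$ at all. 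So the fallback assertion is false in the degenerate cases of Theorem~\ref{theorem:dimension(2x2)}(1),(2)(a),(3)(a), which are precisely the cases your bookkeeping would have to confront. (The paper's own proof takes a slightly different route --- for each plane $\mathcal{W}_1$ of $\mathcal{W}$ it determines exactly when $\mathcal{V}\cap\mathcal{W}_1$ is infinite, using that every component of $\mathcal{V}$ has dimension at least $1$ --- and its Case~2 quietly passes over the same two-dimensional degenerations; but a complete argument must either dispose of these cases separately or restrict to the situation where both $f_a$ and $f_b$ are nonzero with no common factor, where your component-counting does go through.) The ``in particular'' clause via Theorem~\ref{theorem:bounds} and Corollary~\ref{corollary:NEisDEgeneric} is fine.
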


\begin{proof}
The last sentence of the theorem follows from the equivalence above together with Theorem~\ref{theorem:bounds} and Corollary~\ref{corollary:NEisDEgeneric}. So, it is left to prove the equivalence of (1) and (2).

As noted in Remark~\ref{remark:constructible}, (1) is equivalent to $\mathcal{W}$ not containing any irreducible component of $\mathcal{V}$. This boils down to the following: No irreducible component $p_{ij} + p_{kl} = 0$ of $\mathcal{W}$ contains an irreducible component of $\mathcal{V}$. This, in turn, is equivalent to no irreducible component of $\mathcal{W}$ containing infinitely many points of $\mathcal{V}$ because $\mathcal{V}$ is defined by two equations in a three-dimensional space and therefore each of its irreducible components has dimension at least $1$. 

We will now exactly determine those cases in which an irreducible component of $\mathcal{W}$ contains infinitely many points from $\mathcal{V}$. We do it for the irreducible component $\mathcal{W}_1$ defined by $p_{11} + p_{21} = 0$ via a case distinction. The characterization for the other components follows by a suitable permutation of the indices.
After using the relation $p_{21} = -p_{11}$, we get the following two equations for the points of $\mathcal{V}$ lying in $\mathcal{W}_1$:
\begin{align}
\label{equation1}(p_{22}-p_{11})(a_{11}p_{11} + a_{12}p_{12}) - (p_{11}+p_{12})(a_{22}p_{22} - a_{21}p_{11}) &= 0 \\
\label{equation2}(p_{12}+p_{22})p_{11}(b_{11} - b_{21}) &= 0
\end{align}

\noindent
\textbf{Case 1.} $b_{11} - b_{21} \neq 0$. 
Equation~(\ref{equation2}) leads us to the two subcases 1.1 and 1.2.

\noindent
\textbf{Case 1.1.} $p_{11} = 0$. It follows by our initial relation that $p_{21} = 0$ and hence equation~(\ref{equation1}) reduces to $p_{22}p_{12}(a_{12}-a_{22}) = 0$. If now $a_{12}-a_{22} \neq 0$ then $p_{22} = 0$ or $p_{12} = 0$, so there are only the two projective points $(0,0,1,0)$ and $(0,0,0,1)$ left. If, on the other hand, $a_{12} - a_{22} = 0$ then arbitrary values of $p_{22}$ and $p_{12}$ give points that satisfy the equations and hence, in this case, that is, when $X$ lies in 
\[
D(b_{11} - b_{21}) \cap V(a_{12}- a_{22}),
\]
we have infinitely many points of $\mathcal{V}$ lying on $\mathcal{W}_1$.

\noindent
\textbf{Case 1.2.} $p_{12} = - p_{22}$. Here, equation~(\ref{equation1}) reduces to
\[
(p_{22}-p_{11})((a_{11}-a_{21})p_{11} + (a_{22}-a_{12})p_{22}) = 0.
\]
There are two ways in which this can be satisfied: Either $p_{22} - p_{11} = 0$, and hence there is the single projective point $(x,-x,-x,x)$ satisfying this, or $(a_{11}-a_{21})p_{11} + (a_{22}-a_{12})p_{22} = 0$. If at least one of the two terms $a_{11}-a_{21}$ and $a_{22}-a_{12}$ is non-zero then this gives only one projective point. If, however, both of these equal $0$ then all projective points of the form $(p_{11},-p_{11},-p_{22},p_{22})$ satisfy the equations. This is in case $X$ lies in 
\[
D(b_{11} - b_{21}) \cap V(a_{11}- a_{21},a_{22}-a_{11}).
\]

\noindent
\textbf{Case 2.} $b_{11} -b_{21} = 0$. Then, we only have to consider equation~(\ref{equation1}). Looking at the affine open subset of $\mathbb{P}^3$ with $p_{11} = 1$, this reduces to
\[
(a_{12}-a_{22})p_{12}p_{22} + (a_{21}-a_{12})p_{12} + (a_{11} - a_{22})p_{22} +(a_{21}-a_{11}) = 0.
\]
This equation has infinitely many solutions unless the polynomial in the variables $p_{12}$ and $p_{22}$ on the left is a non-zero constant or, equivalently, $a_{12} = a_{22}$, $a_{21} = a_{12}$, $a_{11} = a_{22}$ and $a_{21} \neq a_{11}$ which is impossible. So, for all games in
\[
V(b_{11}-b_{21})
\]
the Spohn variety has an irreducible component inside $\mathcal{W}_1$. To sum it up, the set of games for which this happens equals
\[
V(b_{11}-b_{21}) \cup V(a_{12}-a_{22}) \cup V(a_{11}-a_{21},a_{22}-a_{11}).
\]
Analogous considerations for the other components of $\mathcal{W}$ lead to the three sets
\[
V(b_{12}-b_{22}) \cup V(a_{11}-a_{21}) \cup V(a_{12}-a_{22},a_{11}-a_{22}),
\]
\[
V(a_{11}-a_{12}) \cup V(b_{22}-b_{21}) \cup V(b_{11}-b_{12},b_{22}-b_{11})
\]
and
\[
V(a_{21}-a_{22}) \cup V(b_{11}-b_{12}) \cup V(b_{21}-b_{22},b_{11}-b_{22}).
\]
The union of these four sets gives the set of all games such $a_{ii} = a_{ij}$ or $a_{ii} = a_{ji}$ or $b_{ii} = b_{ij}$ or $b_{ii} = b_{ji}$ for some $i,j \in \{1,2\}$ with $i \neq j$. This proves the statement.
\end{proof}

\begin{remark}
Note that the four pure strategies for $(2\times 2)$-games are always DE of a game satisfying (2) from Theorem~\ref{theorem:2x2} because the pure strategies always lie on the Spohn variety.
\end{remark}

\section*{Acknowledgements} 
We would like to thank Bernd Sturmfels for his helpful advice throughout this project. We are also grateful to Elke Neuhaus for the engaging discussions on the topic, and to the anonymous referee for their detailed and constructive feedback.

\bibliographystyle{amsplainurl}
\bibliography{bibliography}
 
%

\end{document}